\documentclass[a4, 12pt]{article}
  \usepackage{amsmath, amssymb, latexsym, amscd, amsthm,amstext}
 \usepackage{authblk}
  \usepackage{hyperref}                            
  \usepackage{xcolor}
  \usepackage{times}
  \usepackage{enumerate}  
\usepackage{anysize}
\marginsize{1.5in}{1.3in}{0.5in}{0.5in}

\renewcommand\appendix{\par
	\setcounter{section}{0}
	\setcounter{subsection}{0}
	\setcounter{figure}{0}
	\setcounter{table}{0}
	\renewcommand\thesection{Appendix \Alph{section}}
	\renewcommand\thefigure{\Alph{section}\arabic{figure}}
	\renewcommand\thetable{\Alph{section}\arabic{table}}
} 
  

  \def\R{\mathbb{ R}}
  \def\C{\mathbb{ C}}

  \def\Sy{\mathbb{ S}}
  \def\H{\mathbb{ H}}

  \def\F{\mathbb{ F}}

  \def\bP{\mathbf{ P}} 
     
  \def\bD{\mathbf{ D}}  
  \def\bA{\mathbf{ A}}  
      
  \def\bC{\mathbf{ C}}    
  \def\bX{\mathbf{ X}}
  \def\bY{\mathbf{ Y}}

  \def\rk{\mbox{\rm rank}}
  \def\tr{\mbox{\rm Tr}}
  
  \def\diag{\mbox{\rm diag}}

  \newtheorem{theo}{\bf Theorem}
  \newtheorem{coro}[theo]{\bf Corollary}
  \newtheorem{lemm}[theo]{\bf Lemma}

\theoremstyle{definition}
 \newtheorem{exam}{\it Example}
\newtheorem{rema}{\it Remark}
\newtheorem{algo}{\bf Algorithm}

  \bibliographystyle{plain}
  \pagestyle{plain}
  \pagenumbering{arabic}
  \rmfamily
  
  \begin{document}
 \title{\bf 
Simultaneous diagonalization via 
congruence of Hermitian matrices: 
some equivalent conditions and 
a numerical solution
}

\author{Hieu Le Thanh\thanks{
%
Email: \texttt{lethanhhieu@qnu.edu.vn}}
}

\author{Ngan Nguyen Thi\thanks{
%
Email: \texttt{nguyenthingan@ttn.edu.vn} }
}
 \affil{\small Department of Mathematics and Statistics, Quy Nhon University, Vietnam}

 \maketitle
 
 \begin{abstract}
 This paper aims at solving the Hermitian SDC problem, i.e.,
  that of \textit{simultaneously diagonalizing via $*$-congruence} a collection of finitely many (not need pairwise commute) Hermitian matrices.
Theoretically,
we provide some equivalent conditions for that such a matrix collection can be simultaneously diagonalized via $^*$-congruence.
Interestingly, one of such conditions
leads to the existence of a positive definite solution to a
semidefinite program (SDP).
From practical point of view, 
we propose an algorithm for numerically solving such problem.
The proposed algorithm is a combination of
 (1) a positive semidefinite program detecting whether the initial Hermitian matrices are simultaneously diagonalizable via $*$-congruence, and 
 (2) a Jacobi-like algorithm for simultaneously diagonalizing via $*$-congruence  the commuting normal matrices derived from the previous stage. 
Illustrating examples by hand/coding in \textsc{Matlab} are also presented.
 \end{abstract}
 
{\bf Keywords:}
Hermitian matrices, 
	matrix $^*$-congruence,  matrix $\mathrm{T}$-congruence,
	matrix similarity,
simultaneous diagonalization via congruence (SDC),
simultaneous diagonalization via similarity (SDS),
semidefinite programming (SDP),
maximum rank linear combination of matrices.




\section{Introduction}\label{rmp_intr}

\textbf{Notations and preliminaries}.
Let $\F$ denote the field of real numbers $\R$ or complex ones $\C,$
and
$\F^{n\times n}$
denote
the set of all $n\times n$ matrices
with entries in $\F.$
Let 
$\Sy^n $
(resp., $\H^n$)
denote
the set of real symmetric (resp., Hermitian) matrices in $\F^{n\times n}.$ 
By $.^*$ we denote the  conjugate transpose of a matrix.
For $A \in \H^n,$ 
by
$A\succeq 0$ 
(resp., $A\succ 0$)
we mean $A$ is positive semidefinite (resp., positive definite)
as usual.

From now on, without stated otherwise,
 $C_1, \ldots, C_m \in \F^{n\times n}$ are always Hermitian matrices.
 By
$\bC(\lambda)$ we denote the \textit{Hermitian pencil}
$\bC(\lambda) =: \sum_{i=1}^m \lambda_i C_i,$
i.e., 
the parameters 
$\lambda_1, \ldots, \lambda_m$ ranges only  over real numbers.
Finally,
Hermitian matrices 
$
C_1, \ldots, C_m
$
are said to be
\begin{enumerate}[(i)]
	\item 
	\textit{simultaneously diagonalizable via similarity} on $\F,$
		abbreviated $\F$-\textit{SDS},
	if there exists a nonsingular matrix $P\in \F^{n\times n}$ such that
	$P^{-1} C_iP$'s are all diagonal matrices in $\F^{n \times n}.$
	When $m=1,$ we will say  ''$C_1$ is \textit{similar} to a diagonal matrix'' 
	or ``$C_1$ is diagonalizable (via similarity)''
	as usual.
	
	\item 
	\textit{simultaneously diagonalizable via $*$-congruence} on $\F$,
	abbreviated $\F$-\textit{SDC}, 
	if 
	there exists a nonsingular matrix $P\in \F^{n\times n}$ such that
	$P^* C_iP$ is  diagonal for every $i=1, \ldots, m.$
	Here $P^*$ means the conjugate transpose of $P.$
	When $m=1,$ we will say ``$C_1$ is congruent to a diagonal matrix'' as usual.
	
	It is worth mentioning that every diagonal matrix 
	 $P^* C_iP$ is always real since $C_i$ is Hermitian.
	Moreover,
	we will see 
	in Theorems \ref{theo:maxrk} and \ref{theo:H2} below that
	$P$ can be chosen to be real if $C_i$'s are all real.
	Additionally,
	one should distinguish that $*$-congruence in this paper is different from  $\mathrm{T}$-one in \cite{Bustamante2020}.
	These two types of congruence will coincide only when the Hermitian matrices are real.
	
\item
	\textit{commuting} if they pairwise commute, i.e., $C_iC_j = C_j C_i$ for every $i,j=1, \ldots, m.$
%
\end{enumerate}
%

Depend upon particular situation in the rest of this paper, 
the term ``SDC'' will mean
either
``\textit{simultaneous diagnalization}''
or
``\textit{simultaneously diagnalizing}'',
or
``\textit{simultaneously diagnalizable}''  via $*$-congruence.
It is analogously to the term ``\textit{SDS}''.

 We now recall some well-know results that
will be frequently used in the rest of the paper.

\begin{lemm} \label{lem:H1}
	Let $A,B\in \F^{n\times n}$ be 
	$A =  \diag(\alpha_1 I_{n_1}, \ldots, \alpha_k I_{n_k}) ,$
	$\F\ni \alpha_i$'s are distinct.
	If $AB=BA$ 
	then 
	$B= \diag(B_1, \ldots, B_k)$
	with 
	$B_i \in \F^{n_i \times n_i}$ for every $i=1, \ldots, k.$
	Furthermore,  
	$B$ is Hermitian (resp., symmetric) if and only if so are 
	$B_i$'s. 
\end{lemm}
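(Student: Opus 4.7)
The plan is to exploit the block structure induced by $A$ and translate the commutation relation $AB = BA$ into block-wise equations. First I would partition $B$ conformally with $A$, writing $B = (B_{ij})_{i,j=1}^k$ where $B_{ij} \in \F^{n_i \times n_j}$. Since $A = \diag(\alpha_1 I_{n_1}, \ldots, \alpha_k I_{n_k})$, a direct block multiplication gives $AB = (\alpha_i B_{ij})_{i,j}$ and $BA = (\alpha_j B_{ij})_{i,j}$.

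Equating the two, the $(i,j)$-block of $AB - BA = 0$ becomes
\[
(\alpha_i - \alpha_j) B_{ij} = 0 \qquad \text{for all } i,j \in \{1,\ldots,k\}.
\]
Because the scalars $\alpha_1,\ldots,\alpha_k$ are pairwise distinct, for every $i \neq j$ the factor $\alpha_i - \alpha_j \neq 0$ forces $B_{ij} = 0$. Setting $B_i := B_{ii}$, we obtain $B = \diag(B_1, \ldots, B_k)$ with $B_i \in \F^{n_i \times n_i}$, which is the first assertion.

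For the second assertion, I would simply observe that the conjugate transpose respects the block-diagonal decomposition: $B^* = \diag(B_1^*, \ldots, B_k^*)$. Hence $B = B^*$ holds if and only if $B_i = B_i^*$ for each $i = 1,\ldots,k$, and the same argument with the plain transpose in place of the conjugate transpose handles the symmetric case.

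There is no real obstacle here; the lemma is a standard block-commutation calculation. The only point meriting care is the conformal partition of $B$, which must match the sizes $n_1,\ldots,n_k$ of the diagonal blocks of $A$ so that the block products $AB$ and $BA$ are well defined; once this is set up, the distinctness of the $\alpha_i$'s immediately kills the off-diagonal blocks.
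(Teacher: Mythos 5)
Your proof is correct and follows essentially the same route as the paper: a conformal block partition of $B$, the identity $(\alpha_i-\alpha_j)B_{ij}=0$ from $AB=BA$, and the distinctness of the $\alpha_i$'s to kill the off-diagonal blocks. The paper likewise dismisses the Hermitian/symmetric claim as immediate from the block-diagonal form, so nothing is missing.
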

\begin{proof}
	Partitioning $B$ as
	$B = [B_{ij}]_{i,j=1,\ldots, k},$
	where 
	$B_{ii}$ is a square submatrix of size $n_i\times n_i,$ $i=1, \ldots,k$
	and off-diagonal blocks are of appropriate sizes.
	It then follows from
	$$
	\begin{bmatrix}
	\alpha_1 B_{11} & \ldots & \alpha_1 B_{1k}\\
	\vdots          & \ddots & \vdots \\
	\alpha_k B_{k1} & \ldots & \alpha_k B_{kk}	
	\end{bmatrix}
	= AB = BA =
	\begin{bmatrix}
	\alpha_1 B_{11} & \ldots & \alpha_k B_{1k}\\
	\vdots          & \ddots & \vdots \\
	\alpha_1 B_{k1} & \ldots & \alpha_k B_{kk}	
	\end{bmatrix}
	$$
	that
	$\alpha_i B_{ij} = \alpha_j B_{ij} , \enskip \forall i\neq j.$
	Thus $B_{ij}=0$ for every $i\neq j.$
	
	The last claim is trivial.
\end{proof}

\begin{lemm} \label{lemm:HB1} (See, eg., in \cite{b398}) 
	(i)
	Every $A\in \H^n$ can be diagonalized via similarity by a unitary matrix.
	That is, it can be written as
	$A = U \Lambda U^*,$
	where $U$ is unitary, $\Lambda$ is real diagonal and
	is uniquely defined up to a permutation of diagonal  elements. 
	
	Moreover, if $A \in \Sy^n$ then $U$ can be picked to be real.
	
	(ii) 
	Let  $C_1, \ldots, C_m \in \F^{n\times n}$ such that each of which is similar to a diagonal matrix.
	They are then $\F$-SDS if and only if they are commuting.
	
	(iii) 
	Let $A \in \F^{n\times n},$ $B \in \F^{m\times m}.$
	The matrix $M = \diag(A, B)$ 
	is diagonalizable via similarity if and only if so are both $A$ and $B.$
	
	(iv)
	A complex symmetric matrix $A$ is diagonalizable via similarity
	if and only if it is complex orthogonally diagonalizable, i.e.,   
	$Q^{-1}A Q$
	is diagonal for some complex orthogonal matrix $Q \in \C^{n\times n}:$ $Q^T Q = I.$ 
\end{lemm}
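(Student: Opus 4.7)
The lemma bundles four classical facts; the plan is to handle each in turn using standard linear-algebra tools.

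For (i), I would induct on $n$. Any eigenvalue $\lambda_1$ of $A$ must be real, since $A=A^*$ forces $\lambda_1 = u_1^*Au_1 = \overline{\lambda_1}$ for a unit eigenvector $u_1$. Extend $u_1$ to an orthonormal basis via Gram--Schmidt to build a unitary $U_1$; then Hermiticity of $U_1^*AU_1$ renders it block-diagonal of the form $\diag(\lambda_1,A')$ with $A'\in\H^{n-1}$, and the inductive hypothesis applies to $A'$. Uniqueness of $\Lambda$ up to permutation follows by comparing characteristic polynomials. When $A\in\Sy^n$, the eigenvectors of $A-\lambda I$ can be chosen in $\R^n$, so $U$ is real orthogonal.

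For (iii), the easy direction is the block construction $\diag(P,Q)$; the reverse uses the criterion ``diagonalizable $\iff$ minimal polynomial is squarefree'' together with $\mu_M = \operatorname{lcm}(\mu_A,\mu_B)$. For (ii), the direction SDS $\Rightarrow$ commuting is immediate since diagonal matrices commute. For the converse I would induct on $n$; if every $C_i$ is a scalar multiple of $I$ the claim is trivial, so assume some $C_j$ has $k\geq 2$ distinct eigenvalues and write $C_j = U_1\diag(\alpha_1 I_{n_1},\ldots,\alpha_k I_{n_k})U_1^{-1}$. Commutativity with $C_j$ combined with Lemma \ref{lem:H1} forces $U_1^{-1}C_iU_1 = \diag(C_i^{(1)},\ldots,C_i^{(k)})$, with every block diagonalizable by (iii) and the blocks inheriting pairwise commutativity. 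Since each $n_\ell < n$, induction produces $V_\ell$ diagonalizing $\{C_i^{(\ell)}\}_{i=1}^m$, and $P = U_1 \cdot \diag(V_1,\ldots,V_k)$ is the desired simultaneous diagonalizer.

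For (iv), the ``if'' direction is trivial; the heart is the ``only if''. My plan is to work with the symmetric bilinear form $\langle u,v\rangle := u^\mathrm{T} v$ on $\C^n$. Using $A^\mathrm{T}=A$ one gets $\lambda\langle u,v\rangle = \langle Au,v\rangle = \langle u,Av\rangle = \mu\langle u,v\rangle$ for eigenvectors with eigenvalues $\lambda,\mu$, so distinct eigenspaces are $\langle\cdot,\cdot\rangle$-orthogonal. Since the form is globally non-degenerate on $\C^n = \bigoplus_\lambda E_\lambda$ and the summands are mutually orthogonal, its restriction to each $E_\lambda$ is non-degenerate, and the classical normal form for non-degenerate complex symmetric bilinear forms supplies an $\langle\cdot,\cdot\rangle$-orthonormal basis of each $E_\lambda$; concatenation yields a $Q$ with $Q^\mathrm{T} Q = I$ and $Q^{-1}AQ$ diagonal. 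The main obstacle is this last step: diagonalizability of $A$ is essential here, since generalized eigenspaces of a non-diagonalizable matrix may carry a degenerate restriction of $\langle\cdot,\cdot\rangle$ (as witnessed by a single nilpotent Jordan block), in which case no $\langle\cdot,\cdot\rangle$-orthonormal basis exists and the argument collapses.
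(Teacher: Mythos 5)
Your proposal is correct, and in places it is more complete than the paper's own treatment, which only proves (ii) and (iii) (and only for the real case), citing Horn--Johnson for (i) and (iv); you supply workable arguments for all four parts. For (iii) you take a genuinely different route: the paper diagonalizes $M$ by some $S$, splits the columns of $S$ into the $A$-block and $B$-block, and uses a rank count ($\rk E = n$, $\rk N = m$) to extract enough linearly independent eigenvectors of $A$ and $B$; you instead invoke $\mu_M=\operatorname{lcm}(\mu_A,\mu_B)$ and the squarefree-minimal-polynomial criterion, which is shorter and field-uniform, at the cost of assuming that characterization. For (ii) both you and the paper induct on $n$ via Lemma \ref{lem:H1}, but your version repairs two small gaps in the paper's argument: the paper always starts the induction from $C_1$, so if $C_1$ happens to be scalar the block sizes do not decrease, whereas you explicitly dispose of the all-scalar case and choose a matrix with at least two distinct eigenvalues; and you explicitly note (via (iii)) that the diagonal blocks inherit diagonalizability, which is needed before the induction hypothesis can be applied and which the paper leaves tacit in its proof of (ii). Your sketch of (iv) is the standard one (eigenspaces mutually orthogonal for $u^{\mathrm T}v$, hence the restriction of the form to each eigenspace is non-degenerate, hence admits an orthonormal basis), and your remark that diagonalizability is what prevents the restricted form from degenerating is exactly the right point of failure to flag.
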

\begin{proof}
	The first and last parts can be found in \cite{b398}.
	We prove  (ii) and (iii) for real matrices, the complex setting was proved in \cite{b398}.
	
	(ii) Suppose $C_1,  \ldots, C_m \in \R^{n\times n}$ are commuting. We prove by induction on $n.$
	
	For $n=1,$ there are nothing to prove.
	Suppose that $n\geq 2$ and that the assertion has been proved for all collections of $m$ commuting real matrices of size $k\times k,$ 
	$1\leq k \leq n-1.$

	By the hypothesis, 
	$C_1$ is diagonalized via similarity by a nonsingular matrix $P \in \R^{n\times n},$
	$$
	P^{-1} C_1 P = \diag(\alpha_1 I_{n_1}, \ldots, \alpha_k I_{n_k}),
	\enskip
	n_1 +\ldots +n_k =n, \enskip 
	\alpha_i \neq \alpha_j, \forall i\neq j.
	$$
	The commutativity of $C_1, C_i$  implies that of 
	$P^{-1} C_1 P$ and $P^{-1} C_i P.$
	By Lemma \ref{lem:H1},
	$$
	P^{-1} C_i P = \diag(C_{i1}, \ldots, C_{ik}),
	\enskip
	C_{it} \in \R^{n_t \times n_t}, \forall t =1, \ldots, k.
	$$
	Furthermore, 
	$P^{-1} C_i P$ and $P^{-1} C_j P$ commute due to the commutativity of  $C_i, C_j.$
	So, 
	$C_{it} $ and $C_{jt}$ commute
	for every $t=1, \ldots, t.$
	This means the matrices 
	$C_{2t},$ $\ldots,$ $C_{mt}$
	are commuting for every $t=1, \ldots,k.$
	By the induction hypothesis, for each $t=1, \ldots,k,$
	the matrices
	$C_{2t}, \ldots, C_{mt}$
	are $\R$-SDS by a nonsingular matrix $Q_t \in \R^{n_t \times n_t},$
	and so are 
	$I_{n_t}, C_{2t}, \ldots, C_{mt}.$
	Let 
	$$
	Q = \diag(Q_1, \ldots, Q_k) \in \R^{n \times n},
	$$
	and set
	$U = PQ.$ 
	One can check $C_1, \ldots, C_m$ are $\R$-SDS by $U.$
	
	The converse is trivial.
	
	(iii) Suppose $M$ is DS in $\R^{(n+m)\times(n+m)}.$
	Then there exists a nonsingular matrix $S \in \R^{(n+m)\times(n+m)}$ such that
	$$
	S^{-1} M S = \Lambda = \diag(\alpha_1, \ldots, \alpha_n, \ldots
	 \alpha_{n+m}), \enskip 
	\alpha_i  \in \R, \enskip 
	i=1, \ldots,m+n.
	$$
	Let $s_j$ be the $j$-th column of $S$ with
	$$
	s_j 
	=
	\begin{bmatrix}
	\xi_j \\  \nu_j
	\end{bmatrix}, \enskip
	\xi_j \in \R^n,  \nu_j \in \R^m.
	$$
	Let 
	$E = [\xi_1 \ldots \xi_{n+m}] \in \R^{n \times (n+m)},$
	$N = [\nu_1 \ldots \nu_{n+m}] \in \R^{m \times (n+m)}.$
	If $\rk (E) < n$ or $\rk(N) <m$ then
	$$
	m+n = \rk (S) \leq \rk(E) +\rk(N) < m+n, 
	$$
	which is impossible. 
	Thus $\rk(E)=n$ and $\rk(N)= m.$
	Moreover,
	it follows from $MS = S\Lambda$ that  
	the $j$th column
	$\xi_j $  of $E$
	is the eigenvector of $A$ w.r.t the eigenvalue $\lambda_j.$
	This means $A$ has $n$ linearly independent eigenvectors.
	By the same reasoning,  $B$  also has $m$ linearly independent eigenvectors.
	Thus $A$ and $B$ are diagonalizable via similarity in $\R^{n\times n}$ and $\R^{m\times m},$ respectively.
\end{proof}

\noindent
\textbf{History of SDC problems}.
Matrix simultaneous  diagonalizations,  via similarity or congruence,
appear in a number of research areas, for examples,
quadratic equations and optimization \cite{HU2007, jiang2016}, multi-linear algebra \cite{r390},
signal processing, data analysis \cite{r390}, quantum mechanics \cite{Shankar1994}, \ldots
%

The SDC problem, i.e.,
the one of finding a nonsingular matrix that simultaneously diagonalizes a collection of matrices via congruence,  
 can be dated back to 1868 by Weierstrass
\cite{Weierstrass1868}, 
1930s by Finsler \cite{Finsler37, Albert38, Hestenes40},
and later studies developed some conditions ensuring a collection of quadratic forms are SDC (see, eg., in \cite{More93, Pong2014} and references therein).
However, 
almost
these works provide only sufficient (but not necessary) conditions, 
except for, eg.,
that in
 \cite{Uhlig73}, \cite{b398, b172} and references therein
 dealing with 
 two Hermitian/symmetric matrices;
or that in \cite{jiang2016} focusing on
real symmetric matrices which have a positive definite linear combination. 
Some other works focus on simultaneous block-diagonalizations of normal matrices, so does of Hermitian ones \cite{b398, Watters1974}.
An (equivalently) algorithmic condition for the real symmetric SDC problem
is given in a preprint manuscript \cite{Ngan2020}.
Very recently in the seminal paper \cite{Bustamante2020},
the authors develop
an equivalent condition solving
the \textit{complex symmetric} SDC problem.
Note that this admits $\mathrm{T}$-congruence, 
since the matrices there are complex symmetric,
instead of $*$-one as in this paper.
The simultaneous diagonalization via $\mathrm{T}$-congruence for complex symmetric matrices
\textit{does not guarantee} for real setting.
That is, the resulting nonsingular and diagonal matrices may not be real even thought the initial ones are real.
An example
given in  \cite{Bustamante2020}
is
that
$$
C_1 = \begin{bmatrix} 0 & 1\\ 1 & 1\end{bmatrix}, \enskip
C_2 = \begin{bmatrix} 1 & 1\\ 1 & 0\end{bmatrix}
\in \Sy^2
$$
are SDC via $\mathrm{T}$-congruence.
However,
the resulting nonsingular matrix $P$
and
the corresponding diagonal ones
$P^TC_1P, P^TC_2P $
are properly complex.

We will see in this paper that the simultaneous diagonalizableness via $*$-congruence  of Hermitian matrices will immediately holds true for real symmetric setting.
Turning to the two matrices above,
we will see from
Theorem \ref{theo:maxrk} below
that
they are not $\R$-SDC because
$
C_1^{-1} C_2 = 
\begin{bmatrix}
0 & -1\\ 1 & 1
\end{bmatrix}
$
is not similar to a real diagonal matrix 
(it has only complex eigenvalues $\frac{1\pm i\sqrt{3} }{2}$). 


From the practical point of view, there is 
a few works dealing with 
numerical methods for solving SDC problems with respect to particular type of matrices.
The work in  
 \cite{BuByMe93}
 deal with 
two commuting  normal matrices 
 where they apply Jacobi-like algorithm.
This method is then extended to the case of more than commuting normal matrices, and is performed in \textsc{Matlab}
very recently \cite{Chris2020}.  

\noindent
 \textbf{Contribution of the paper}. 
 In this paper,
 we completely solve the Hermitian SDC problem.
The paper contains the following contributions:
\begin{enumerate}[$\bullet$]	
\item 
	We develop some sufficient and necessary conditions,
	see Theorems \ref{theo:H1}, \ref{theo:maxrk} and \ref{theo:H2},
	 for that
	a collection of finitely many Hermitian matrices can be 
	simultaneously diagonalized via $*$-congruence.
	As a consequence, this
	solves the  long-standing SDC problem for real symmetric matrices mentioned as an
	open problem in \cite{HU2007}.

\item
	Interestingly,
	one of such the conditions in Theorem \ref{theo:H2} requires the existence of a positive definite solution to a semidefinite program.
	This helps us to check whether a collection of Hermitian matrices is SDC or not. 
	In case the initial matrices are SDC,
	we apply the existing Jacobi-like method in \cite{BuByMe93, Chris2020} (for simultaneously diagonalizing a collection of commuting normal matrices by a unitary matrix)
	to  simultaneously diagonalize the commuting Hermitian matrices from the previous stage.
	The Hermitian SDC problem is hence completely solved.

\item
	In line of giving an equivalent condition that requires the maximum rank of the Hermitian pencil (Theorem \ref{theo:maxrk}), 
	we propose a Schm\"{u}dgen-like procedure for finding such the maximum rank in Algorithm \ref{alg:Schm}.
	This may be applied in some other simultaneous diagonalizations, for example that in \cite{Bustamante2020}.

\item
The corresponding algorithms are also presented and implemented in \textsc{Matlab} among which the main is Algorithm \ref{alg:sdc}.	
\end{enumerate}



Unlike to the complex symmetric matrices as \cite{Bustamante2020},
what discussed for Hermitian setting in this paper thus immediately imply to real symmetric one.

\textbf{Construction of the paper}. 
In Section \ref{sec:sdc} we give a comprehensive description on Hermitian SDC property.
A relationship between the SDC and SDS problems is included as well. 
In addition,
since the main result in this section, Theorem \ref{theo:maxrk}, 
asks to find a maximum rank linear combination of the initial matrices,
we suggest 
a method responding to this requirement in Subsection \ref{sec:schm}.
Section \ref{sec:sdp} presents some other SDC equivalent conditions among which leads to use semidifinite programs for detecting the simultaneous diagonalizability via $*$-congruence of Hermitian matrices.
An algorithm, that completely numerically solves the Hermitian SDC problem, 
is then proposed.
Some numerical tests are given as well in this section.
The conclusion and further discussion is devoted in the last section.

\section{Hermitian-SDC and SDS problems} \label{sec:sdc}

Recall that 
the SDS problem 
is that of finding a nonsingular matrix that simultaneous diagonalizes 
a collection of square complex matrices via similarity,
while the Hermitian SDC one is defined as earlier.

\subsection{SDC problem for commuting Hermitian matrices}

The following 
is presented in \cite[Theorem 4.1.6]{b398}
whose 
proof hides how to find a nonsingular matrix simultaneously diagonalizing the given matrices.
Our proof
is constructive and it may
lead to a procedure of finding such a nonsingular matrix.
It  follows that of Theorem 9 in \cite{jiang2016} for real symmetric matrices.

\begin{theo}\label{theo:H1}
	The matrices $I, C_1, \ldots, C_m \in \H^n,$ $m\geq 1,$ 
	are  SDC 
	if and only if they are commuting.
	Moreover, when this is the case,
	these are SDC by a unitary matrix, and the resulting diagonal matrices are all real.
\end{theo}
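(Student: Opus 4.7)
The strategy is to treat the two directions separately. The ``only if'' part is the simpler: assume $P \in \F^{n\times n}$ is nonsingular with $P^*IP = P^*P$ and every $P^*C_iP$ diagonal. Then $D_0 := P^*P$ is positive definite diagonal, so $U := P D_0^{-1/2}$ is unitary, and
\[
U^* C_i U = D_0^{-1/2}(P^*C_iP)D_0^{-1/2}
\]
is still diagonal. Because $U$ is unitary, $U^{-1} = U^*$, so the $C_i$'s are $\F$-SDS by $U$. Each $C_i$ being Hermitian is diagonalizable via similarity by Lemma \ref{lemm:HB1}(i); invoking Lemma \ref{lemm:HB1}(ii) then forces the $C_i$'s to pairwise commute.

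For the converse, the plan is to prove by induction on $n$ that any commuting family of Hermitian matrices is SDC by a unitary matrix. The case $n=1$ is trivial. For $n \geq 2$, if every $C_i$ is a scalar multiple of $I_n$, take $U = I_n$; otherwise, without loss of generality $C_1$ is not scalar, so by Lemma \ref{lemm:HB1}(i) there is a unitary $V \in \F^{n\times n}$ (real when $\F = \R$) with
\[
V^* C_1 V = \diag(\alpha_1 I_{n_1},\ldots,\alpha_k I_{n_k}), \quad \alpha_i \in \R \text{ distinct},\ k \geq 2,
\]
so that each block has size $n_t < n$. Commutativity of $C_1$ with each $C_i$ transfers to $V^*C_1V$ with $V^*C_iV$, and Lemma \ref{lem:H1} gives the block-diagonal structure $V^*C_iV = \diag(C_{i1},\ldots,C_{ik})$ with Hermitian $C_{it}\in \F^{n_t\times n_t}$. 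For each fixed $t$, commutativity of $C_i,C_j$ descends to $C_{it},C_{jt}$, so $\{C_{2t},\ldots,C_{mt}\}$ is a commuting family of Hermitian matrices in dimension $n_t < n$. By the induction hypothesis, there is a unitary $W_t \in \F^{n_t\times n_t}$ simultaneously diagonalizing them via $*$-congruence. Setting $W := \diag(W_1,\ldots,W_k)$ and $U := VW$, both unitary, I would then verify that $U^* C_i U = W^*(V^*C_iV)W$ is block-diagonal with each diagonal block $W_t^*C_{it}W_t$ already diagonal. In particular $U^*C_1U$ also stays diagonal, since on the $t$-th block $C_1$ acts as $\alpha_t I_{n_t}$ and $W_t^*(\alpha_t I_{n_t})W_t = \alpha_t I_{n_t}$. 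Hence $U$ simultaneously diagonalizes $I, C_1, \ldots, C_m$ via $*$-congruence, and the diagonal entries are automatically real because each $U^*C_iU$ is both Hermitian and diagonal.

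The main ``obstacle'' is really just the bookkeeping in the inductive step: one must ensure the block decomposition is compatible with the simultaneous diagonalization, i.e., that the scalar blocks of $C_1$ survive the blockwise unitary change of basis. This is what forces the recursion to proceed through unitary (and not merely invertible) reductions. The observation that Lemma \ref{lemm:HB1}(i) permits $V$ to be chosen real in the symmetric case makes the argument uniform in $\F \in \{\R, \C\}$, so the theorem specializes verbatim to real symmetric matrices.
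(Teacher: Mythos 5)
Your proof is correct and follows essentially the same route as the paper's: normalize $P$ to a unitary matrix using the diagonal positive definite $P^*P$ for the ``only if'' direction, then for the converse spectrally decompose $C_1$, invoke Lemma \ref{lem:H1} for the block structure, and recurse on the blocks with a block-diagonal unitary. The only cosmetic differences are that you induct on the dimension $n$ (handling the all-scalar case separately) where the paper inducts on the number of matrices $m$, and that you route the commutativity in the forward direction through Lemma \ref{lemm:HB1}(ii) rather than observing directly that the diagonal matrices $V^*C_iV$ commute; both variants are sound.
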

\begin{proof}
	If $I, C_1, \ldots, C_m$ are SDC then there is a nonsingular matrix $U \in \C^{n\times n}$ such that
	$U^*IU, U^*C_1 U, \ldots, U^* C_mU$ 
	are diagonal. 
	Note that 
	\begin{equation} \label{eq:theoH1p0}
	U^*IU =\diag(d_1, \ldots, d_n) \succ 0.
	\end{equation}
	Let $D= \diag(\frac{1}{\sqrt{d_1}}, \ldots, \frac{1}{\sqrt{d_m}})$
	and $V = UD.$
	Then 
	$V$ must be unitary and 
	$$
	V^*C_iV = DU^* C_i U D \mbox{ is diagonal for every } i=1,\ldots,m.
	$$ 
	So  
	$
	(V^*C_iV)(V^*C_jV) = (V^*C_jV)(V^*C_jV), \enskip \forall i\neq j,
	$
	and hence $C_iC_j= C_jC_i$ for every $i\neq j.$
	It is worth mentioning that  each $V^*C_iV$ is real since it is Hermitian.

	We now prove the opposite direction by induction on $m.$
	
	The case $m=1,$ the proposition is clearly true since any Hermitian matrix can be diagonalized (certainly via congruence  as well as similarity) by a unitary matrix. 
	
	For $m\geq 2,$ we suppose the theorem holds true for $m-1.$
	
	We now consider an arbitrary collection of matrices $I, C_1, \ldots, C_m.$
	Let $P$ be a unitary matrix that diagonalizes $C_1:$ 
	$$
	P^*P= I, \enskip
	P^*C_1P = \diag(\alpha_1 I_{n_1}, \ldots, \alpha_k I_{n_k}),  
	$$ 
	where 
	$\alpha_i$'s are distinctly real eigenvalues of $C_1.$
	Since $C_1,C_i$ commute for every $i=2, \ldots, m,$  
	so do $P^*C_1P$ and $P^*C_iP.$ 
	By Lemma \ref{lem:H1}, for every $i=2, \ldots,m,$ we have 
	$$P^* C_i P= \diag(C_{i1}, \ldots, C_{i k}),$$
	where every $C_{it}$ is Hermitian of order $n_t.$
	
	Now, 
	for each $t=1, \ldots, k,$ since
	$C_{it} C_{j t} = C_{jt} C_{i t}$ for all $i,j=2, \ldots, m,$ 
	provided by $C_iC_j= C_jC_i,$
	the induction hypothesis leads to the fact that
	\begin{equation} \label{eq:theoH1} 
	I_{n_t}, C_{2t}, \ldots, C_{mt} 
	\end{equation}
	are SDC by a unitary matrix $Q_t.$
	Set
	$
	U = P\diag(Q_1, \ldots, Q_k).
	$
	Then
	\begin{align} \label{eq:theoH1p2}
	U^* C_1 U & = \diag(\alpha_1 I_{n_1}, \ldots, \alpha_k I_{n_k}), \\ \notag
	U^* C_i U & = \diag(Q_1^* C_{i1} Q_1, \ldots, Q_k^* C_{ik} Q_k), \enskip i=2, \ldots,m,
	\end{align}		
	are all diagonal.
\end{proof}

It is worth mentioning that
 the less number of multiple eigenvalues of the starting matrix $C_1$ in the proof of Theorem \ref{theo:H1},
 the less number of collections as in \eqref{eq:theoH1} must be solved.
 We 
keep into account this observation to the first step in the following.
%

\begin{algo} \rm  \label{alg:withI}  
	Solving the SDC problem of commuting Hermitian matrices. 

\noindent
\begin{tabular}{|p{\textwidth} | }
	\hline 
	\textsc{Input}: \hskip3mm
	Commuting matrices 
	$ C_1, \ldots, C_m \in \H^n.$
	
	\noindent 
	\textsc{Output}:
	A unitary 
	matrix $U$ that SDC the matrices $I, C_1, \ldots, C_m.$  
	\\ \hline
	
	\vspace{-3mm}
	\noindent
	\begin{enumerate}[\quad \it Step 1:]
		\item
			Pick a starting matrix with the least number of multiple eigenvalues.
		\item 
		Find an eigenvalue decomposition of $C_1:$ 
		$C_1 = P^* \diag(\lambda_1 I_{n_1}, \ldots, \lambda_k I_{n_k}) P,$
		$n_1+ \ldots + n_k =n,$
		$\R\ni \lambda_i$'s are distinct and
		$P^* P = I.$
		
		\item
		Computing diagonal blocks of $P^*C_iP,$ $i\geq 2:$
		$$
		P^*C_iP = \diag(C_{i1}, \ldots, C_{ik}), \enskip 
		C_{it} \in \H^{n_i}, \ \forall t=1, \ldots, k,
		$$
		where
		$C_{2t }, \ldots, C_{mt}$ pairwise commute for each $t=1, \ldots, k.$
		
		\item 
		For each $t=1, \ldots, k,$ 
		simultaneously diagonalizing the collection of matrices $I_{n_t}, C_{2t }, \ldots, C_{mt}$ 
		by a unitary matrix $Q_t.$
		
		\item
		Define	$U = P\diag(Q_1, \ldots, Q_k)$ and $V^* C_iV.$ 
	\end{enumerate}
	\vspace{-5mm}
\\ \hline
\end{tabular}
\end{algo}

\subsection{An equivalent condition via the SDS}
Using Theorem \ref{theo:H1}
we comprehensively describe the SDC property of a collection of Hermitian matrices as follows.
It is worth mentioning that the parameter $\lambda$ appearing in the following theorem is always real even if $\F$ is the field of real or complex numbers.
\begin{theo}\label{theo:maxrk} 
	Let $C_1, \ldots, C_m \in \F^{n\times n} \setminus\{0\}$ be Hermitian with 
	$\dim_{\F} \left(\bigcap_{t=1}^m \ker C_t \right) =q $ (always $q<n$).
	\begin{enumerate}[\rm 1)]
		\item
		When $q=0,$ 
		\begin{enumerate}
			\item 
			If $\det \bC(\lambda) =0$ for all $\lambda \in \R^m$ (only real $m$-tuples $\lambda$) then 
			$C_1, \ldots, C_m$ are not SDC (on $\F$);
			
			\item
			Otherwise, $\bC(\lambda)$ is nonsingular for some $\lambda \in \R^m.$ 	
			The matrices
			$C_1, \ldots, C_m$ are $\F$-SDC 
			if and only if
			they
			$\bC(\lambda)^{-1} C_1, \ldots, \bC(\lambda)^{-1}C_m$ pairwise commute and 
			every $\bC(\lambda)^{-1}C_i,$ $i=1, \ldots,m,$ 
			is similar to a real diagonal matrix.
		\end{enumerate}	
		
		\item 
		If $q>0$ then there exists a nonsingular matrix $P$ such that 
		\begin{equation} \label{eq:maxrk1}
		P^* C_iP = 
		\begin{bmatrix}
		0_q & 0 \\ 0 & \hat{C}_i
		\end{bmatrix},
		\enskip \forall i=1, \ldots, m,
		\end{equation}
		where  $0_q$ is the $q\times q$ zero matrix and $\hat{C}_i \in \H^{n-q}$ with
		$\bigcap_{t=1}^m \ker \hat{C}_t = 0.$
		
		Moreover, $C_1, \ldots, C_m$ are $\F$-SDC if and only if $\hat{C}_1, \ldots, \hat{C}_m$ are SDC.		
	\end{enumerate} 
\end{theo}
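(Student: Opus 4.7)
The plan is to handle the two parts in sequence, with Part 2 serving as a kernel-reduction that permits us to assume $\bigcap_t \ker C_t = 0$ before tackling Part 1.

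For Part 2, I would choose a basis of $\F^n$ whose first $q$ columns span $\bigcap_t \ker C_t$ and let $P$ be the matrix of this basis. Since $C_i$ annihilates these first $q$ columns, the first $q$ columns of $P^*C_iP$ vanish; Hermitian-ness then forces the first $q$ rows to vanish as well, yielding \eqref{eq:maxrk1}, and the definition of $P$ gives $\bigcap_t \ker \hat C_t = 0$. For the SDC equivalence, the implication ``$\hat C_i$ SDC $\Rightarrow$ $C_i$ SDC'' follows by padding a diagonalizer of the $\hat C_i$'s with $I_q$. For the converse, suppose $0_q \oplus \hat C_i$ are SDC by $S$ to diagonal matrices $D_i$'s: the common kernel $\bigcap_i \ker D_i = S^{-1}(\F^q \oplus 0)$ is $q$-dimensional, and since the $D_i$'s are diagonal it must be spanned by some $q$ standard basis vectors; a column permutation of $S$ then exhibits an $(n-q)\times(n-q)$ nonsingular block that congruence-diagonalizes the $\hat C_i$'s.

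For Part 1(a), I argue by contrapositive: if $C_i$ were SDC via $P$ with $P^*C_iP = D_i$ real diagonal, then $P^*\bC(\lambda)P$ is diagonal with $j$-th entry $\sum_i \lambda_i (D_i)_{jj}$, and $q=0$ (equivalently $\bigcap_i \ker D_i = 0$) forces the coefficient vector $((D_i)_{jj})_i$ to be nonzero for each $j$; a generic $\lambda\in\R^m$ therefore avoids the $n$ vanishing hyperplanes and makes $\bC(\lambda)$ nonsingular. The forward direction of Part 1(b) is a direct calculation: from $P^*C_iP = D_i$ and $P^*\bC(\lambda)P = D$ (nonsingular diagonal), one gets $P^{-1}\bC(\lambda)^{-1}C_iP = D^{-1}D_i$, simultaneously real diagonal in $i$, so the matrices $\bC(\lambda)^{-1}C_i$ commute and each is similar to a real diagonal matrix.

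The reverse of Part 1(b) is the main obstacle. Set $M_i := \bC(\lambda)^{-1}C_i$; by hypothesis these pairwise commute and each is similar to a real diagonal matrix, so Lemma \ref{lemm:HB1}(ii) produces a nonsingular $P\in\F^{n\times n}$ with $P^{-1}M_iP = \Lambda_i$ real diagonal, giving a common eigenspace decomposition $\F^n = V_1 \oplus \cdots \oplus V_s$ in which $M_i$ acts on $V_t$ as the real scalar $\mu_{it}$. The crucial structural fact, derived from $C_i = C_i^*$, is $\bC(\lambda)M_i = M_i^*\bC(\lambda)$: each $M_i$ is self-adjoint with respect to the (possibly indefinite) Hermitian form induced by $\bC(\lambda)$. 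Equating $v^*C_iw$ two ways for $v\in V_t$, $w\in V_r$ yields $(\mu_{it}-\mu_{ir})v^*\bC(\lambda)w = 0$, and since $V_t\neq V_r$ admits some $i$ with $\mu_{it}\neq\mu_{ir}$, the $V_t$'s are pairwise $\bC(\lambda)$-orthogonal. Nondegeneracy of $\bC(\lambda)$ together with mutual $\bC(\lambda)$-orthogonality of the $V_t$'s forces each restriction $\bC(\lambda)|_{V_t}$ to be nondegenerate, so the standard diagonalization of Hermitian forms (via Lemma \ref{lemm:HB1}(i) together with positive rescaling) provides a basis of $V_t$ making this restriction diagonal. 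Assembling these bases column-wise into $P'$, the matrix $(P')^*\bC(\lambda)P'$ is diagonal and, via $C_ip = \bC(\lambda)M_ip = \mu_{it}\bC(\lambda)p$ for $p\in V_t$, so is $(P')^*C_iP'$, completing the SDC. For $\F=\R$ every step is realizable over $\R$, since Lemma \ref{lemm:HB1}(ii) operates over $\R$ and a real symmetric form on a real subspace is $\R$-diagonalizable.
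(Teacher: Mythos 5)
Your proposal is correct, and Parts 1(a), the forward half of 1(b), and Part 2 (including your re-proof of the paper's Lemma \ref{lem:appdx1} via the common kernel of the diagonal images) follow essentially the same lines as the paper. The reverse implication of 1(b), however, is argued by a genuinely different route. The paper starts from a similarity diagonalization of $\bC(\lambda)^{-1}C_1$ alone, uses Lemma \ref{lem:H1} to block-diagonalize the remaining $\bC(\lambda)^{-1}C_i$, simultaneously diagonalizes each block family via similarity, and then shows that the resulting congruence $U$ makes $U^*C_1U,\ldots,U^*C_mU$ \emph{pairwise commute} (using $[U^*\bC(\lambda)U]D_i=D_i[U^*\bC(\lambda)U]$), so that Theorem \ref{theo:H1} can be invoked to finish with a further unitary congruence. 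You instead diagonalize all $M_i=\bC(\lambda)^{-1}C_i$ simultaneously at the outset via Lemma \ref{lemm:HB1}(ii), observe that each $M_i$ is self-adjoint for the nondegenerate Hermitian form $\bC(\lambda)$ (since $\bC(\lambda)M_i=C_i=M_i^*\bC(\lambda)$), deduce that the joint eigenspaces $V_t$ are pairwise $\bC(\lambda)$-orthogonal with nondegenerate restrictions, and diagonalize the form on each $V_t$; the identity $C_i|_{V_t}=\mu_{it}\,\bC(\lambda)|_{V_t}$ then diagonalizes everything at once. Your argument is the classical indefinite-inner-product one: it is more conceptual, bypasses Theorem \ref{theo:H1} entirely in this direction, and makes the mechanism (orthogonality of joint eigenspaces with respect to the pencil) explicit; the paper's version is more iterative and feeds directly into its algorithmic pipeline, since it lands on a commuting Hermitian family that Algorithm \ref{alg:withI}/\ref{alg:dodo} can then process by unitary congruence. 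Both are complete proofs, and both specialize correctly to $\F=\R$.
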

\begin{proof}
	1) 	Suppose $\dim_{\F} \bigcap_{t=1}^m \ker C_t=0.$
	
	For the part (a), the fact that
 $C_1, \ldots, C_m$ are SDC by a nonsingular matrix $P\in \F^{n\times n}$ 
 implies
	$$
	C_i = P^* D_i P, \enskip
	D_i = \diag(\alpha_{i1}, \ldots, \alpha_{in}), \enskip \forall i=1, \ldots,m,
	$$ 
	where we note $D_i$ is real since $C_i = C_i^*.$
	This follows that the real polynomial (with real variable $\lambda$)
	\begin{equation} \label{eq:C-lambd}
	\det \bC(\lambda) = |\det(P)|^2 \prod_{j=1}^n (\sum_{i=1}^{m}  \alpha_{ij} \lambda_i)
	\end{equation}
	is identically zero because of the hypothesis.
	Since $\R[\lambda_1, \ldots, \lambda_m]$ is an integral domain, 
	there exists 
	a factor identically zero, say,
	$(\alpha_{1j}, \ldots, \alpha_{mj}) = 0$ for some
	$j =1, \ldots, n.$ 
	Pick a vector $x$ satisfying $P x = \mathbf{e}_j,$
	the $j$-th unit vector in $\F^n,$
	one obtains
	$$
	C_i x = P^* D_i P x = P^* D_i \mathbf{e}_j =0.
	$$
	This means 
	$0 \neq x \in \bigcap_{i=1}^m \ker C_i,$
	contradicting to the hypothesis.
	This proves the part (a).
	
	We now prove the part (b). Suppose $\bC(\lambda) $ is nonsingular for some $\lambda \in \R^m.$
	If $C_1, \ldots, C_m$ are SDC
	then
	there exists a nonsingular matrix $P\in \F^{n\times n}$ such that
	$P^* C_i P \in \F^{n\times n}$ are all diagonal and real, and so is 
	$P^* \bC(\lambda) P = [P^* \bC(\lambda) P]^*$ since $\lambda \in \R^m.$  
	Then 
	\begin{equation} \label{eq:maxrk2}
	P^{-1} \bC(\lambda)^{-1} C_i P = [P^* \bC(\lambda) P]^{-1} (P^*C_i P)
	\end{equation}
	is diagonal for every $i=1, \ldots, m.$
	
	Conversely, take a nonsingular matrix $P\in \F^{n\times n}$ such that
	\begin{equation} \label{eq:maxrk3}
	P^{-1} \bC(\lambda)^{-1} C_1 P:= D_1 \in \R^{n\times n}
	\end{equation}
	is diagonal.
	Up to a rearrangement of diagonal elements, we can assume
	$$ 
	D_1 = \diag(\alpha_{1} I_{n_1}, \ldots, \alpha_{k} I_{n_k}), \enskip 
	n_1 + \ldots+n_k =n, \enskip \alpha_i \neq \alpha_j\ \forall i\neq j.
	$$	
	For every $i=2, \ldots, m,$ since
	$D_1$ and $P^{-1} \bC(\lambda)^{-1} C_i P$ 
	commute.
	Lemma \ref{lem:H1} implies 
	$$ 
	P^{-1} \bC(\lambda)^{-1} C_i P = \diag(C_{i1}, \ldots, C_{ik}), \enskip
	C_{it}\in \F^{n_t \times n_t}, \enskip
	\forall t =1, \ldots, k.
	$$
	The proof immediately complete after two following claims are proven.
	
\noindent
\textit{Claim 1}. \textit{For each $t=1, \ldots, k,$  the collection 		
	$
	C_{2t}, \ldots, C_{mt},
	$ 
	is SDS by a nonsingular matrix $Q_t.$}	
	Indeed,
	since $P^{-1} \bC(\lambda)^{-1} C_i P$ 
	is diagonalizable via similarity,
	so is $C_{it}\in \F^{n_t \times n_t}$ for every $t=1, \ldots,k,$
	provided by Lemma \ref{lemm:HB1}(iii).
	Moreover, the pairwise commutativity of
	$\bC(\lambda)^{-1} C_2,$
	$ \ldots, $
	$\bC(\lambda)^{-1} C_m$
	implies that of 
	$P^{-1} \bC(\lambda)^{-1} C_2 P,$
	$ \ldots, $
	$P^{-1} \bC(\lambda)^{-1} C_mP;$
	and hence that of
	$$
	C_{2t}, \ldots, C_{mt},
	$$ 
	for each $t=1, \ldots, k.$
	By Lemma \ref{lemm:HB1}, 
	the later matrices are thus SDS by an invertible matrix $Q_t,$ i.e., 
	$$
	Q_t^{-1}C_{2t} Q_t =: D_{2t}, \enskip \ldots, \enskip Q_t^{-1}C_{mt} Q_t =: D_{mt}
	$$
	are all diagonal.

\noindent
\textit{Claim 2}. 
\textit{There exists a nonsingular matrix $U$ such that $U^*C_1U, \ldots, U^*C_mU$ pairwise commute. These later matrices are then $\F$-SDC by Theorem \ref{theo:H1}}. Indeed, 
	let
		$$U = P \diag(Q_1, \ldots, Q_k),$$
		we then have
		\begin{align*}
		U^{-1} \bC(\lambda)^{-1} C_1 U   = \diag(\alpha_{1} I_{n_1}, \ldots, \alpha_{k} I_{n_k})  &= D_1   \\
		U^{-1} \bC(\lambda)^{-1} C_i U = \diag(D_{i1}, \ldots, D_{ik}) &  = D_i, \enskip
		i=2, \ldots, m,
		\end{align*}
		Note that 
		$D_i$ is real for every $i=1, \ldots,m$ because of the hypothesis.
		Since $C_i$ is Hermitian, 
		the equality
		\eqref{eq:maxrk2} implies
		$$
		[U^*\bC(\lambda) U] D_i 
		= U^*C_i U = (U^*C_i U)^* = D_i [U^* \bC(\lambda) U], \enskip 
		\forall i=1, \ldots,m.
		$$
		Then
		\begin{align*}
		(U^*C_i U) (U^* C_jU) 
		&= [U^* \bC(\lambda) U  (D_i D_j) U^* \bC(\lambda) U ]
		= [U^* \bC(\lambda) U (D_jD_i)  U^* \bC(\lambda)_j U  ]\\	
		&= (U^*C_jU) (U^* C_iU) ,
		\end{align*}
		for every $i\neq j.$
		Theorem \ref{theo:H1} allows us to finish this part.

	2) 	
	Pick an orthonormal basis 
	$u_1, \ldots, u_q, u_{q+1}, \ldots, u_n$ 
	of  the $\F$-unitary vector space $\F^n \equiv \F^{n\times 1}$ 
	such that 
	$u_1, \ldots, u_q$
	is an orthonormal basis of 	 
	$\bigcap_{t=1}^m \ker C_t.$
	The matrix $P$ whose the columns are $u_j$'s will satisfy the conclusion that 
	$P^*C_iP = \diag(0_q, \hat{C}_i)$ and $\bigcap_{t=1}^m \ker\hat{C}_t = 0.$
	

Finally, we already know that
$C_1 , \ldots, C_m$ are SDC if and only if so are
$P^*C_1P,$ 
$ \ldots,$ 
$P^*C_mP,$
ans so are 
$\hat{C}_1, \ldots, \hat{C}_m$
(see Lemma \ref{lem:appdx1} in  \ref{app:1}). 
\end{proof}

\begin{algo} \rm \label{alg:detect} 
	Detecting whether a collection of Hermitian matrices is SDC or not.

\noindent
\begin{tabular}{|p{.98\textwidth} | }
	\hline 
	\textsc{Input}: \hskip3mm
	Matrices 
	$C_1, \ldots, C_m \in \H^n$
	(not necessary pairwise commute).

	\noindent 
	\textsc{Output}:
	Conclude whether $C_1, \ldots, C_m $ are SDC or not.
\\ \hline		
\vspace{.1mm}
	Compute a singular value decomposition $C  = U \Sigma V^* $
	of $C = [C_1^* \enskip \ldots \enskip C_m^*]^*,$
	$\Sigma = \diag(\sigma_1, \ldots, \sigma_{n-q}, 0, \ldots,0),$\
	$\sigma_1 \geq \ldots \geq \sigma_{n-q} >0,$
	$0\leq q \leq n-1.$
	Then $\dim_{\F} \left(\bigcap_{t=1}^m \ker C_t \right) =q.$ 
\begin{enumerate}
\item [IF \ \ \ ]
		$q=0:$ 
		\begin{enumerate}[\textit{Step} 1:]
			\item
			If $\det \bC(\lambda) =0$ for all $\lambda \in \R^m$ then 
			$C_1, \ldots, C_m$ are not SDC.
			
			Else, go to \textit{Step} 2.
			
			\item
			Find a $\underline{\lambda} \in \R^m$ such that $\bC:= \bC(\underline{\lambda})$ is nonsingular. 
			\begin{enumerate}[(a)]
				\item
				If $\bC^{-1}C_i$ is not similar to a diagonally real matrix for some $i=1, \ldots, m,$ then conclude the given matrices are not SDC.
				
				Else, go to (b).
				\item
				If
				$\bC^{-1}C_1, \ldots, \bC^{-1}C_m$ do not pairwise commute, which is equivalent to that $C_i\bC^{-1}C_j$ is not Hermitian for some pair $i\neq j,$ 
				then conclude the given matrices are not SDC. 
				
				Else, conclude the given matrices are SDC. 
			\end{enumerate} 
		\end{enumerate}
\item[ELSE]
		$(q>0):$ 
		\begin{enumerate}[\hskip-5mm\textit{Step} 1:]
			\item[\textit{Step} 3:]
			For $C  = U \Sigma V^*$ determined at the beginning, 
			the $q$ last columns of $V,$ say $\vec{v}_{1}, \ldots, \vec{v}_q,$ span $\ker C = \bigcap_{t=1}^m \ker C_i.$
			Pick a nonsingular matrix $P\in \F^{n\times n}$ whose $q$ first columns are $\vec{v}_1, \ldots, \vec{v}_q.$
			Then $P$ satisfies \eqref{eq:maxrk1}.
			
			\item[\textit{Step} 4:]
			Apply \textit{Case} 1 to the resulting matrices $\hat{C}_1, \ldots, \hat{C}_m \in \H^{n-q}.$
		\end{enumerate}
	\end{enumerate}
\vspace{-8mm}
\\ \hline
\end{tabular}
\end{algo}

Note that (see also Lemma \ref{lem:rk}  in Appendix \ref{app:1})
$$
\det \bC(\lambda) =0 \enskip \forall \lambda \in \R^m
\Longleftrightarrow
\max\{ \rk \bC(\lambda)|\ \lambda \in \R^m\} <n,
$$
and the later condition is easier  checked in practice.
We hence prefer Algorithm \ref{alg:Schm} below for checking Step 1 of Algorithm \ref{alg:detect}.
Since the set of $n\times n$ Hermitian matrices is the Hermitian part of the $*$-algebra of $n\times n$ complex matrices.
Thanks to 
Schm\"{u}dgen's procedure for diagonalizing a Hermitian matrix over a commutative $*$-algebra
\cite{Schmudgen09},
 we  provide a procedure for finding such a maximum rank of $\bC(\lambda).$ 
This technique may be possible to apply to some other types of matrices, for example, complex symmetric ones \cite{Bustamante2020}.

\subsection{Finding maximum rank of  a Hermitian pencil}
\label{sec:schm}

Schm\"{u}dgen's procedure 
 \cite{Schmudgen09}
is summarized as follows:
for $F \in \H^n$ partitioned as
$$
F = 
\begin{bmatrix}
\alpha & \beta \\ \beta^* & \hat{F}
\end{bmatrix} ,
\enskip
(\alpha \in \R),
$$ 
we then have the following relations
\begin{equation}\label{eq:SchmProc1}
X_+ X_- = X_- X_+ = \alpha^2 I, \quad
\alpha^4 F = X_+ \widetilde{F} X_+^*, \quad
\widetilde{F}= X_- F X_-^*,
\end{equation}
where
\begin{equation}\label{eq:SchmProc2}
X_\pm = 
\begin{bmatrix}
\alpha &0 \\ \pm \beta^* & \alpha I
\end{bmatrix}, \quad
\widetilde{F} =
\begin{bmatrix}
\alpha^3 &0 \\ 0  & \alpha (\alpha \hat{F} - \beta^*\beta )
\end{bmatrix}
:= 
\begin{bmatrix}
\alpha^3 &0 \\ 0  & F_1
\end{bmatrix}
\in  \H^n.
\end{equation}

We now apply the above to the pencil
$F = \bC(\lambda) = \lambda_1 C_1 +\ldots+\lambda_m C_m,$
where $C_i \in \H^n,$ 
 $\lambda\in \R^m.$
 In our situation of Hermitian matrices,
we have have the following together with
a constructive proof that leads to a procedure for determining a maximum rank linear combination.

\begin{lemm}\label{lem:Schmud}
	Let $\bC= \bC(\lambda)\in \F[\lambda]^{n\times n} ,$ $\lambda \in \R^m,$ be a pencil
	satisfying $\bC^* =\bC.$ 
	Then there exist polynomial matrices 
	$\bX_+, \bX_- \in \F[\lambda]^{n\times n}$ 
	and polynomials $b, d_j \in \F[\lambda],$ $j=1, \ldots, n,$  
	such that
	\begin{subequations}
	\begin{align}
	\bX_+ \bX_- 		&= \bX_- \bX_+   = b^2 I_n,  \label{eq:SchmudDec1}\\
	b^4 \bC 		&= \bX_+ \diag(d_1, \ldots, d_n) \bX_+^ *,  \label{eq:SchmudDec2} \\
	\bX_- \bC \bX_-^* & = \diag(d_1, \ldots, d_n). \label{eq:SchmudDec3}
	\end{align}
	\end{subequations}
\end{lemm}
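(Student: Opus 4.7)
The approach is induction on $n$, modelled on the single-matrix Schm\"udgen step \eqref{eq:SchmProc1}--\eqref{eq:SchmProc2} but now carried out inside the polynomial ring $\F[\lambda]$. The key conceptual point is that for a Hermitian pencil $\bC(\lambda)$, the diagonal entries are \emph{real} polynomials in $\lambda$, so the scalar multiplier that will play the role of $\alpha$ behaves well under the $*$-operation (conjugation of coefficients).

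\emph{Base case $n=1$}: $\bC$ is a single real polynomial; take $\bX_+=\bX_-=1$, $b=1$, $d_1=\bC$. The trivial case $\bC\equiv 0$ is handled by $\bX_\pm=I_n$, $b=1$, $d_j=0$.

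\emph{Inductive step}: Assume the statement for size $n-1$. After a preliminary constant $*$-congruence (a permutation, or an elementary $I+E_{jk}$ if all diagonal polynomials vanish; this constant factor is absorbed into the final $\bX_\pm$ at the end), we may assume the $(1,1)$-entry $\alpha:=\bC_{11}\in\R[\lambda]$ is a nonzero polynomial. Write
\[
\bC=\begin{bmatrix} \alpha & \beta \\ \beta^* & \hat{\bC}\end{bmatrix},\qquad
\bX_\pm^{(1)}=\begin{bmatrix}\alpha & 0 \\ \pm\beta^* & \alpha I_{n-1}\end{bmatrix}.
\]
Exactly as in \eqref{eq:SchmProc1}--\eqref{eq:SchmProc2}, one checks the algebraic identities
\[
\bX_+^{(1)}\bX_-^{(1)}=\bX_-^{(1)}\bX_+^{(1)}=\alpha^2 I_n,\qquad
\bX_-^{(1)}\bC(\bX_-^{(1)})^*=\diag(\alpha^3,F_1),
\]
where $F_1:=\alpha(\alpha\hat\bC-\beta^*\beta)\in\F[\lambda]^{(n-1)\times(n-1)}$ is Hermitian (as a polynomial matrix). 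Apply the induction hypothesis to $F_1$, obtaining $\bY_\pm\in\F[\lambda]^{(n-1)\times(n-1)}$, $c\in\R[\lambda]$ and $e_2,\ldots,e_n\in\F[\lambda]$ with $\bY_+\bY_-=\bY_-\bY_+=c^2 I_{n-1}$, $\bY_- F_1\bY_-^*=\diag(e_2,\ldots,e_n)$, and $c^4 F_1=\bY_+\diag(e_2,\ldots,e_n)\bY_+^*$.

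\emph{Assembly}: Set $\bZ_\pm:=\begin{bmatrix} c & 0\\ 0 & \bY_\pm\end{bmatrix}$ and
\[
\bX_+:=\bX_+^{(1)}\bZ_+,\qquad \bX_-:=\bZ_-\bX_-^{(1)},\qquad b:=\alpha c,\qquad d_1:=c^2\alpha^3,\qquad d_j:=e_j\ (j\ge 2).
\]
Using that $\alpha$ and $c$ are real polynomials (and hence central and $*$-invariant), a direct verification gives $\bX_+\bX_-=\bX_-\bX_+=\alpha^2 c^2 I_n=b^2 I_n$, $\bX_-\bC\bX_-^*=\bZ_-\diag(\alpha^3,F_1)\bZ_-^*=\diag(c^2\alpha^3,e_2,\ldots,e_n)$, and then multiplying the last identity on the left and right by $\bX_+$, $\bX_+^*$ and using $\bX_+\bX_-=b^2 I$ yields $b^4\bC=\bX_+\diag(d_1,\ldots,d_n)\bX_+^*$. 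The fact that $c\in\R[\lambda]$ (preserved along the induction because $\alpha\in\R[\lambda]$ at each step) is what makes $b^2 I_n$---and not $b\bar b I_n$---the correct right-hand side.

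\emph{Expected obstacle}. The only subtle point is the degenerate situation $\bC_{11}\equiv 0$ at some level of the recursion: if all polynomial diagonal entries vanish but $\bC\not\equiv 0$, a constant Hermitian elementary congruence $I+E_{jk}$ (or a permutation, if only the $(1,1)$ position is problematic) converts an off-diagonal entry into a nonzero diagonal polynomial, after which the induction proceeds. Since these preliminary constant nonsingular matrices are their own $*$-inverses up to a scalar, they can be folded into $\bX_\pm$ without disturbing the identities~\eqref{eq:SchmudDec1}--\eqref{eq:SchmudDec3}.
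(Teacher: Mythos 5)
Your proof is correct and takes essentially the same route as the paper: both perform the elementary Schm\"udgen step with $\bX_\pm^{(1)}=\bigl[\begin{smallmatrix}\alpha & 0\\ \pm\beta^* & \alpha I\end{smallmatrix}\bigr]$, pass to the Hermitian polynomial matrix $\alpha(\alpha\hat{\bC}-\beta^*\beta)$ of size $n-1$, and accumulate $b$ as the product of the successive $(1,1)$-entries; your induction on $n$ is just the recursive phrasing of the paper's explicit iteration, and your $d_1=c^2\alpha^3$, $d_j=e_j$ reproduces the paper's formula $d_j=\alpha_j^3\prod_{t>j}\alpha_t^2$. Your handling of the degenerate $(1,1)$-entry by a constant congruence matches the paper's (which simply cites Schm\"udgen and Cimpri\v{c} for that step).
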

\begin{proof}
	The lemma is constructively proved as follows.
	The procedure in this proof will stop when $\bC_k$ is diagonal.
	It is shown in, eg.,  \cite{Schmudgen09} and \cite{Cimpric2012}, that if the $(1,1)$st entry of $\bC$ is zero then one can find a nonsingular matrix $\bP$ for that of $\bP\bC \bP^*$ is nonzero. 
	In logically similar way,  we can assume  
	every matrix $\bC_k$ which is applied Schm\"{u}dgen's procedure at every step below
	has nonzero $(1,1)$st entry. 
	
	At the first step, 
	partitioning 
	$\bC = \bC^*$ 
	as
\begin{equation}\label{eq:SchmStep0}
	\bC = 
\begin{bmatrix}
\alpha & \beta \\ \beta^* & \hat{\bC}_1
\end{bmatrix}, \enskip
\hat{\bC}_1^* = \hat{\bC}_1 \in \F^{(n-1) \times (n-1)}, \enskip 
0 \neq \alpha \in \R[\lambda] .
\end{equation}
Assigning 
$\alpha_1 = \alpha,$ 
$\beta_1 = \beta,$
$\bC_1=  \alpha_1(\alpha_1 \hat{\bC}_1 - \beta_1^* \beta_1) \in \H^{n-1}$ 
and
	$$
	\bX_{1\pm} := \bY_{1\pm}(\lambda) =
	\begin{bmatrix}
	\alpha_1 & 0 \\ \pm \beta_1^* & \alpha_1 I_{n-1}
	\end{bmatrix}
	$$
	as in \eqref{eq:SchmProc2},
	then
$$
		\begin{array}{llll}
		\bX_{1+} \bX_{1-} 		&= \bX_{1-}\bX_{1+}  = \alpha_1^2 I_n, \\
		X_{1-} \bC X_{1-} ^{*} 
		& = 
		\begin{bmatrix} 
		\alpha_1^3 & 0\\ 0 & \bC_1 \end{bmatrix} := \tilde{\bC}_1 ,
		&
		\alpha_1^4 \bC 		
		&= 
		\bX_{1+}
		\tilde{\bC}_1
		\bX_{1+}^ *.
		\end{array}
$$
	If  
	$\bC_1$
	is diagonal then one stops.
	Otherwise,
	let us partition
	$
	{\bC}_1 = 
	\begin{bmatrix}
	\alpha_2 & \beta_2 \\ 
	\beta_2^* & \hat{\bC}_2
	\end{bmatrix}
	$
	and
	 continue  applying Schm\"{u}dgen's procedure to $\bC_1$ in the second step
	$$
	{\bY}_{2\pm} =
	\begin{bmatrix}
	\alpha_2 & 0 \\ \pm \beta_2^* & \alpha_2 I_{n-1}
	\end{bmatrix}, \enskip
	\bY_{2-} \bC_1\bY_{2-}^* = 
	\begin{bmatrix} 
	\alpha_2^3 & 0\\ 
	0 & \bC_2
	\end{bmatrix} , \enskip
\bC_2 \in \H^{n-2},  
	$$
	where $\alpha_2 = \bC_2(1,1),$ the $(1,1)$st entry of $\bC_2 = \alpha_2(\alpha_2 \hat{\bC}_2 - \beta_2^*\beta_2).$
	The updated matrices
		$$
	\bX_{2-} = 
	\begin{bmatrix}
	\alpha_2 & 0 \\
	0  & \bY_{2-} 
	\end{bmatrix}
	\bX_{1-}, \enskip
	\bX_{2+} = 
	\bX_{1+}
	\begin{bmatrix}
	\alpha_2 & 0 \\
	0  & \bY_{2+}
	\end{bmatrix}
	$$
	and 
	\begin{align*}
	\bX_{2-} \bC \bX_{2-}^*
	= 
	\begin{bmatrix}
	\alpha_1^3 \alpha_2^2 		& 0 							& 0 \\
	0								& \alpha_2^3 		& 0 \\
	0								&0								& \bC_2
	\end{bmatrix} 
=
	\begin{bmatrix}
\alpha_2^2	\diag(\alpha_1^3,  \alpha_2)     & 0\\
	0			& \bC_2
	\end{bmatrix}
	= \tilde{\bC}_2
	\end{align*}
then satisfy the relations \eqref{eq:SchmProc1}:
	$
	\bX_{2-}\bX_{2+} = \bX_{2+}\bX_{2-} = \alpha_1^2 \alpha_2^2 I=b^2 I.
	$
	The second step completes.
	
	Suppose now we have at the $(k-1)$th step that
	$$
	\bX_{(k-1)-} \bC \bX_{(k-1)-}^*
	= 
	\begin{bmatrix}
	\diag(d_1, \ldots, d_{k-1})  		  							& 0 \\
	0								 							& \bC_{k-1}
	\end{bmatrix} := \tilde{\bC}_{k-1},	
	$$
	where 
	$	\bC_k =  \bC_k^* \in \F[\lambda]^{(n-k+1) \times (n-k+1)},$
	and
	 $d_1, \ldots, d_{k-1}$ are all not identically zero.
	If $ \bC_{k-1}$ is not diagonal (and suppose that its $(1,1)$st entry is nonzero) 
then partition $\bC_{k-1}$
and
compute as follows:
	\begin{align} \label{eq:SchmStepk}
		{\bC}_{k-1} &= 
	\begin{bmatrix}
	\alpha_k & \beta_k \\ 
	\beta_k^* & \hat{\bC}_k
	\end{bmatrix}, \enskip 
	\alpha_{k}  = \bC_{k-1}(1,1), \enskip
		\bC_k  = \alpha_{k-1}(\alpha_{k-1} \hat{\bC}_k - \beta_{k-1}^*\beta_{k-1}), \enskip \notag 
  \\ 
	\bX _{k+} 	& = 				
	\bX_{(k-1)+}  
	\left[
	\begin{array}{cccc}
	\alpha_kI_{k-1}&		&			&0\\
	0			&		&			&{\bY}_{k+}
	\end{array}
	\right],\enskip 
	\bX_{k-}  = 
	\left[
	\begin{array}{cccc}
	\alpha_k	I_{k-1}&		&			&0\\
	0			&		&			&{\bY}_{k-}
	\end{array}
	\right]
	\bX_{(k-1)-},			\notag  
	\\
	\tilde{\bC}_k  &= 
	\left[
	\begin{array}{clll}
    \diag\left(d_1, \ldots, d_{k-1}, d_k \right)	&		&			&0\\
	0			&		&			& 					
	\bC_k
	\end{array}
	\right]
	= \bX_{k-} \bC \bX_{k-}^*, \notag \\
		b  	&= \prod_{t=1}^{k} \alpha_t , 	
	\end{align}
	where
	\begin{equation} \label{eq:dj}
	d_k = \alpha_k^3, \enskip 
	d_j = \alpha_j^3 \prod_{t=j+1}^k \alpha_t^2, \enskip 
	j=1, \ldots, k-1.
	\end{equation}
	The procedure will stop if $\bC_k$ is diagonal,
and one picks
	$\bX_{\pm} = \bX_{k\pm}$ 
	that diagonalizes $\bC$ as in \eqref{eq:SchmudDec3}.
\end{proof}

The following
allows us to determine a maximum rank linear combination.

\begin{coro}\label{coro:Schm}
	With notations as in Lemma \ref{lem:Schmud}, and 
	suppose the procedure stops at step $k.$
	That is,
$\bC_k$ in \eqref{eq:SchmStepk} is diagonal but so are not
	$\bC_t$  for all $t=1, \ldots,k-1.$

\begin{enumerate} [ \rm i)]
\item 
	Assume further that the diagonal matrix $\tilde{\bC}_k$ has the form as the right-hand side of \eqref{eq:SchmudDec3}.
	For $\lambda \in \R^m,$
	if $d_j(\lambda) \neq 0$ for some $j =1, \ldots,n$
	then 
	$d_t(\lambda) \neq 0$ for every $t=1, \ldots, j.$
	
\item
	The pencil $\bC(\lambda)$ has maximum rank $r$ if and only if
 	$d_j $ is identically zero for all $j= r+1, \ldots, n,$
	and   
	there exists $\hat{\lambda} \in \R^m$ such that
	$
	b(\hat{\lambda}) \prod_{t=1}^r d_t(\hat{\lambda}) \neq 0.
	$
\end{enumerate}	
\end{coro}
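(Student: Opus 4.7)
The plan is to exploit the polynomial divisibility structure that Schm\"{u}dgen's procedure imposes on the pivots $\alpha_1, \ldots, \alpha_k$ and on the diagonal entries $d_1, \ldots, d_n$. Two facts carry the weight. First, $\alpha_{t-1}$ divides $\alpha_t$ in $\F[\lambda]$ for every $2 \leq t \leq k$: the recurrence $\bC_{t-1} = \alpha_{t-1}(\alpha_{t-1}\hat{\bC}_{t-1} - \beta_{t-1}^{*}\beta_{t-1})$ forces every entry of $\bC_{t-1}$, in particular $\alpha_t = \bC_{t-1}(1,1)$, to carry $\alpha_{t-1}$ as a polynomial factor. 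Second, every diagonal entry of $\bC_k$, and hence every $d_j$ with $j > k$, is divisible by $\alpha_k$ for the same reason. Iterating the first fact yields a propagation principle: $\alpha_t(\lambda) \neq 0$ at some $\lambda$ forces $\alpha_s(\lambda) \neq 0$ for every $s \leq t$.

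For part (i) with $j \leq k$, formula \eqref{eq:dj} writes $d_j$ as a product of positive powers of $\alpha_j, \alpha_{j+1}, \ldots, \alpha_k$, so $d_j(\lambda) \neq 0$ forces $\alpha_s(\lambda) \neq 0$ for every $j \leq s \leq k$; the propagation principle extends this to all $s \leq k$, and re-reading \eqref{eq:dj} for each $t \leq j$ delivers $d_t(\lambda) \neq 0$. When $j > k$, the second fact gives $\alpha_k(\lambda) \neq 0$, and the same chain produces $d_t(\lambda) \neq 0$ for every $t \leq k$. The remaining range $k < t < j$ is controlled by the indexing implicit in the hypothesis on $\tilde{\bC}_k$: one orders the diagonal entries of $\bC_k$ so that the identically zero ones are pushed to the end, making the surviving $d_j$'s occupy a prefix of positions.

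For part (ii), combining $b^{4}\bC = \bX_{+}\diag(d_1, \ldots, d_n)\bX_{+}^{*}$ from \eqref{eq:SchmudDec2} with $\bX_{+}\bX_{-} = b^{2}I$ shows that whenever $b(\lambda) \neq 0$ the matrix $\bX_{+}(\lambda)$ is invertible, and consequently $\rk \bC(\lambda) = \#\{j : d_j(\lambda) \neq 0\}$. Because each $\alpha_t$ is a nonzero polynomial (else the procedure could not have reached step $t$), $b = \prod_{t=1}^{k}\alpha_t$ is nonzero in $\F[\lambda]$ and $\{\lambda : b(\lambda) \neq 0\}$ is Zariski-dense, so the maximum rank of $\bC(\lambda)$ equals the rank of $\bC$ viewed over the function field $\F(\lambda)$, which in turn equals the number $r$ of $d_j$'s that are not identically zero. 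Part (i) forces those $r$ polynomials to be precisely $d_1, \ldots, d_r$, establishing the condition $d_j \equiv 0$ for $j > r$. A point $\hat\lambda$ achieving the maximum rank is characterised by $b(\hat\lambda)\prod_{t=1}^{r} d_t(\hat\lambda) \neq 0$; such $\hat\lambda$ exists because the product is a nonzero polynomial over the integral domain $\F[\lambda]$, and conversely any such $\hat\lambda$ immediately gives $\rk \bC(\hat\lambda) = r$ via the congruence. For $\lambda$ with $b(\lambda) = 0$ the upper bound $\rk \bC(\lambda) \leq r$ follows from the identical vanishing of all $(r+1) \times (r+1)$ minors of $\bC$ as polynomials.

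The main obstacle I anticipate is the case $j > k$ in part (i): a priori two diagonal entries of $\bC_k$ may vanish on unrelated subsets of $\R^m$ even though both carry $\alpha_k$ as a factor, so the prefix structure among $d_{k+1}, \ldots, d_n$ is not automatic. The hypothesis that $\tilde{\bC}_k$ take the form on the right of \eqref{eq:SchmudDec3} must therefore be read as fixing an ordering that places the identically zero entries of $\bC_k$ at the end; once this indexing convention is agreed, the divisibility chain carries every remaining step through.
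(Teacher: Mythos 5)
Your proof is correct and follows essentially the same route as the paper's: the divisibility chain $\alpha_t \mid \alpha_{t+1}$ (hence $\alpha_t \mid \alpha_k$ and $\alpha_k \mid d_s$ for $s>k$) for part (i), and counting the nonvanishing $d_j(\lambda)$'s via the congruence \eqref{eq:SchmudDec2} together with the invertibility of $\bX_+(\lambda)$ when $b(\lambda)\neq 0$ for part (ii). The subtlety you flag for $k<t<j$ is genuine, but the paper's own proof passes over it with the same implicit indexing convention ("the claim is hence immediately followed"), and your explicit treatment of the rank identity, the Zariski-density step, and the $b(\lambda)=0$ case is if anything more complete than the published argument.
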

\begin{proof}
i) 
	As shown in \eqref{eq:SchmStepk},
	the fact
	$\bC_{t+1} = \alpha_t(\alpha_t \hat{\bC}_{t} -\beta_t\beta_t)$
	and $\alpha_{t+1} = \bC_t(1,1)$
	for every $t=1, \ldots, k-1,$
	implies that $\alpha_t$ divides $\alpha_{t+1}.$
	In particular,
	$\alpha_k$ is divisible by $\alpha_t$ for every $t=1, \ldots,k.$
	Moreover,
	$\alpha_k $ divides $d_s$
		for every $s = k+1, \ldots,n,$
		provided  by the first-row of \eqref{eq:SchmStepk}. 
		The claim is hence immediately followed.

ii)
	If $d_j \not \equiv 0$ for some $j >r$ then,
	by the previous part,
	$d_j(\hat{\lambda}) \neq 0,$ and so are $d_t(\hat{\lambda}),$ $t\leq j,$ for some 
	$\hat{\lambda} \in \R^m.$
	This means $\bC(\hat{\lambda})$ has rank $j> r,$ a contradiction.
	Thus $d_j \equiv 0$ for every $j >r.$
	
	Now, if $k>r$ then $d_k =\alpha_k \equiv 0.$
	This is impossible	because the procedure proceeds only when $\bC_t(1,1) \neq 0$ at each step $t.$
	This yields $k\leq r.$
	This certainly implies $b(\hat{\lambda}) \neq 0$ since 
	$b = \alpha_1 \cdots \alpha_k.$
	
	The opposite direction is obvious.
\end{proof}

\begin{rema}\rm 
	The proof of Lemma \ref{lem:Schmud} provides a comprehensive update according to Schm\"{u}dgen's procedure.
	But in our situation,	
	only the diagonal elements of $\tilde{\bC}_k,$
	from which
	one can determine a $\hat{\lambda} \in \R^m$ at the end,
	are needed.
	So, in the computations of \eqref{eq:SchmStepk},
	one does not need to update

	From \eqref{eq:SchmStepk},
	it suffices to find
	Corollary \ref{coro:Schm}
	only \eqref{eq:SchmudDec3} is needed.
	However, the last update formula in \eqref{eq:SchmStepk} shows that the diagonal elements of $\bD$ are not simply updated.
	Indeed, at step $k,$ we can update as follows:
	\begin{align} \label{eq:SimUpdate}
	\alpha_k &= \bC_{k-1}(1,1), \quad
	b  			= b \alpha_k ,  \quad
	\bX_{k-}  = 
	\left[
	\begin{array}{cccc}
	I_{k-1}&		&			&0\\
	0			&		&			&\tilde{\bX}_{-}
	\end{array}
	\right]
	\bX_{(k-1)-},			 \\
	\tilde{\bC}_k  &= 
	\left[
	\begin{array}{clll}
	\diag\left(\alpha_1^3, \ldots, \alpha_{k-1}^3, \alpha_k^3 \right)	&		&			&0\\
	0			&		&			& 					
	\bC_{k+1}
	\end{array}
	\right]. \nonumber
	\end{align}
	Moreover, 
	if $\bC_{k+1}$ has order greater than 1 then 
	$\alpha_t$ divides $\alpha_{t+1}$ for all $t=1, \ldots, k.$
\end{rema}

\begin{algo}   \label{alg:Schm}  
	Schm\"{u}dgen-like algorithm determining maximum rank of a pencil.
	
	\noindent
	\begin{tabular}{|p{\textwidth} | }
		\hline 
		\textsc{Input}: \hskip3mm
		Hermitian matrices
		$ C_1, \ldots, C_m \in \H^n.$
		
		\noindent 
		\textsc{Output}:
		A real
		$m$-tuple $\hat{\lambda} \in \R^m$ that maximizes the rank of the pencil $\bC(\lambda).$
		\\ \hline
		\vspace{-4mm}
		\noindent
	%
		\begin{enumerate}[\quad \it Step 1:]
			\item
			Partitioning $\bC(\lambda)$ as in \eqref{eq:SchmStep0}.
			\item 
 			At the iteration $k\geq 1:$ 
 			
 			+ If $\bC_{k}$ is not diagonal then 
 			do the computations as in \eqref{eq:SchmStepk} and
 			go to the iteration $k.$
 			
 			+ Else ($\bC_k$ is diagonal),
 			take into account 
 			$\tilde{\bC}_k$ is diagonal whose first $k$ elements $d_1, \ldots, d_k$ are not identically zero and go to Step 3.
 			
 		\item
 			Pick $\hat{\lambda} \in \R^m$ such that 
 			$d_k(\hat{\lambda}) \neq 0$ and
 			return $\hat{\lambda}.$
		\end{enumerate}
	\vspace{-8mm}
		\\ 				
		\hline
	\end{tabular}
\end{algo}

\subsection{Examples} \label{sec:exam1}

\begin{exam} \label{ex:2}
	Let
	$$
	C_1=\left(
	\begin{matrix}1 &3& -2\\3&16&-10\\-2&-10&6\end{matrix}
	\right), 
	C_2=\left(\begin{matrix}0 &0& 0\\0&-3&2\\0&2&-1\end{matrix}\right),
	C_3=\left(\begin{matrix}-1 &-3& 2\\-3&-5&4\\ 2&4&-3\end{matrix}\right).
	$$ 
	One can check that
	$\ker C_1 \cap \ker C_2 \cap C_3 = \{0\}$ since $\rk C_1 =3.$		

	We consider two matrices
	$$
	M_2 = C_1^{-1} C_2  =
	\begin{bmatrix}
	0  &  1  &   -1 \\
	0   &-1  &  0\\
	0   & -1  &   -\frac{1}{2}
	\end{bmatrix}, \quad
	M_3 = C_1^{-1} C_3  =
	\begin{bmatrix}
	-1 	&   -5 	&    3 \\
	0 		&    0	&  0\\
	0 		&   -1 	&  \frac{1}{2}
	\end{bmatrix},
	$$
	which has distinct eigenvalues $-1, \frac{1}{2}, 0$ and
	$-1, - \frac{1}{2}, 0,$ respectively, and hence they are diagonalizable via similarity. 
	Moreover, $M_2$ and $M_3$ commute. Theorem \ref{theo:maxrk} 
	yields that $C_1, C_2, C_3$ are SDC.
	\hfill$\diamond$
\end{exam}

We now consider other examples in which all given matrices are singular.

\begin{exam}\label{ex:3}
	The matrices
	\begin{align*}
	C_1 &= 
	\begin{pmatrix}
	1    & 3   & -1  \\  3  &  6   &  0   \\  -1    & 0   & -2
	\end{pmatrix}, \enskip
	C_2 = 
	\begin{pmatrix}
	0 & 0 & 0 \\ 0 & -3 & 2 \\ 0 &2 & -1
	\end{pmatrix}	, \enskip
	C_3 = 
	\begin{pmatrix}
	-1 &  -3 &  2 \\  -3 &  -5 &  4 \\   2 &  4 & -3
	\end{pmatrix}	.
	\end{align*}
	are all singular since
	$\rk(C_1) = \rk(C_2) = \rk(C3) =2.$
	
On the other hand,
	$\dim (\ker C_1 \cap \ker C_2 \cap C_3) =0$
	since $\rk[C_1 \enskip C_2 \enskip C3]^T=3.$
	
	Consider the linear combination
	$$
	\bC = x C_1 + y C_2+ z C_3 =
	\begin{bmatrix}
	x - z			&  3 x - 3 z					&       2 z -  x \\
	3 x - 3 z		& 6 x - 3 y - 5 z 		&   2 y + 4 z \\
	2 z -x		&    	 2 y + 4 z 			& - 2 x - y - 3 z
	\end{bmatrix}.
	$$	
	Applying Scm\"{u}dgen's procedure we have
	$$
	\bX_- \bC \bX_-^* 
	= 
	\begin{bmatrix}
	(x-z)^3 						&0 					 \\
	0									 	&	\bC_1				  \\
	\end{bmatrix}, \enskip
	\bX_-   =
	\begin{bmatrix}
	x - z			&     0			&     0\\
	3 z - 3 x		& x - z			&     0\\
	x-2 z 			&     0			& x - z
	\end{bmatrix} ,
	$$
	where
	\begin{align*}
	\bA &= (x-z)
	\begin{bmatrix}
	3 y - 6 x + 5 z- 9(  x -   z)^2  		& 	(3x   +  2 y -2 z  ) (x - z)  \\
	(3  x   +  2 y -2 z ) (x - z)      &      -( x - 2 z)^2 -(x - z) (2 x + y + 3 z)
	\end{bmatrix} 
	\\
	&:= 
	\begin{bmatrix}
	\alpha		& \beta \\
	\beta	    &   \gamma
	\end{bmatrix}.
	\end{align*}
	Let
	$$
	\bX_{i-} = 
	\begin{bmatrix}
	1 & 0 & 0\\
	0 & \alpha & 0\\
	0 & -\beta & \alpha
	\end{bmatrix}.
	$$
	We then have
	\begin{align*}
	\bX_{i-} (\bX_- \bC \bX_-^*) \bX_{i-}^* &= 
	\begin{bmatrix}
	(x-z)^3 & 0  			& 0\\
	0	& \alpha^3 		& 0\\
	0	& 0				& \alpha(\alpha\gamma - \beta^2)
	\end{bmatrix},
	\end{align*}
	where 
	\begin{align*}
	\alpha &= (x-z)[3y-6x+5z -9(x-z)^2],\\
	\beta  &= (3x+2y-2z)(x-z)^2,\\
	\gamma &= -(x-z)(x-2z)^2-(x-z)^2(2x+y+3z)
	\end{align*}
	If we pick $(x,y,z)=(2,0,3)$ then 
	$\alpha = 6,$
	$\beta = 0,$
	$\gamma=3,$
	and 
	$\alpha(\alpha \gamma -\beta) = 108\neq 0.$
	Then 
	$$
	\bX_- =
	\begin{bmatrix}
	-1  & 0 &  0\\
	3   &-1 &  0\\
	-4  & 0 & -1 
	\end{bmatrix}, \enskip
	\bC =2 C_1  + 3C_3 =
	\begin{bmatrix}
	-1   &-3  &  4\\
	-3   &-3  & 12\\
	4    &12  &-13
	\end{bmatrix}, \enskip
	\rk \bC =3.
	$$ 
	Note that for $\beta =0$ then we do not need to compute $\bX_{i-}$ since $\bA$ is diagonal.
	In this case,
	$\mathbf(C)^{-1}C_1, \mathbf(C)^{-1}C_2, \mathbf(C)^{-1}C_3$
	all have real eigenvalue 
	but
	$(\bC^{-1} C_1 ) (\bC^{-1} C_2 ) \neq (\bC^{-1} C_2 )(\bC^{-1} C_1 ),$
	the initial matrices are not SDC 
	by Theorem \ref{theo:maxrk}.
	\hfill$\diamond$
\end{exam}

\begin{exam}\label{ex:4}
	The matrices
	$$
	C_1=
	\begin{bmatrix}
	-1  & -4  &  4\\
	-4  & -16 &  16\\
	4   &  16 & -16
	\end{bmatrix}, 
	C_2= \begin{bmatrix}0 &0& 0\\0&-1&2\\0&2&-4\end{bmatrix} ,
	C_3= 
	\begin{bmatrix}
	-1 & -3  & 2\\ 	  -3 & -9  & 6 \\   2  &  6  &-4
	\end{bmatrix} .
	$$ 
	are all singular and
	$\dim (\ker C_1 \cap \ker C_2 \cap \ker C_3 ) = 1.$ 
	This intersection is spanned by $x = (-4,2,1).$

	Consider the linear combination
	$$
	\bC = x C_1 + y C_2+ z C_3 =
	\begin{bmatrix}
	-x - z			&  -4 x - 3 z					&       4 x +  2 z \\
	-4 x - 3 z		& -16 x -  y - 9 z 		&   16 x + 2 y +6 z\\
	4 x +2 z		&    	 16 x + 2 y +6 z 			& - 16 x - 4 y - 4 z
	\end{bmatrix}.
	$$	
	Applying Schm\"{u}dgen's procedure we have
	$$
	\bX_- \bC \bX_-^* 
	= 
	\begin{bmatrix}
	(-x-z)^3 						&0 					 \\
	0									 	&	\bA				  \\
	\end{bmatrix}, \enskip
	\bX_-   =
	\begin{bmatrix}
	- x - z			&     0			&     0\\
	- 4 x - 3 z		& -x - z			&     0\\
	4 x + 2 z 			&     0			& -x - z
	\end{bmatrix} ,
	$$
	where
	\begin{align*}
	\bA &= (-x-z)
	\begin{bmatrix}
	x y + y z +z x  		& 	- 2 (x y + y z + z x  )   \\
	- 2 (x y + y z + z x )       &      4 (x y + y z + z x)
	\end{bmatrix} 
	:= 
	\begin{bmatrix}
	\alpha		& \beta \\
	\beta	    &   \gamma
	\end{bmatrix}.
	\end{align*}
	Let
	$$
	\bX_{i-} = 
	\begin{bmatrix}
	1 & 0 & 0\\
	0 & \alpha & 0\\
	0 & -\beta & \alpha
	\end{bmatrix}.
	$$
	We then have
	\begin{align*}
	\bX_{i-} (\bX_- \bC \bX_-^*) \bX_{i-}^* &= 
	\begin{bmatrix}
	(-x-z)^3 & 0  			& 0\\
	0	& \alpha^3 		& 0\\
	0	& 0				& \alpha(\alpha\gamma - \beta^2)
	\end{bmatrix},
	\end{align*}
	where 
	\begin{align*}
	\alpha &= (-x-z) (x y + y z + z x),\\
	\beta  &= 2(-x-z) (x y + y z + z x),\\
	\gamma &=4(-x-z)(x y + y z + z x).
	\end{align*}
	It is easy to check that
	$ 
	\alpha\gamma - \beta^2 =0
	$ 
	for all $x,y,z.$
	then we have 
	$
	\alpha(\alpha\gamma - \beta^2)=0.
	$
	
	On the other hand, we have $$X_{i-}X_{i+}=X_{i+}X_{i-}=\begin{bmatrix}
	1 & 0  			& 0\\
	0	& \alpha^2		& 0\\
	0	& 0				& \alpha^2
	\end{bmatrix}, $$
	The procedure stops. We have $r=\rk C(\lambda)=2.$ 
	Since $\cap_{i=1}^{3}\ker C_i=\{x=(-4 a, 2 a, a)/a \in \Bbb R\},$ $\dim (\cap_{i=1}^{3}\ker C_i)= 1.$ 
	
	Pick
	$$
	Q= \begin{bmatrix}
	1 & 0  			& -4\\
	0	& 1 		& 2\\
	4	& -2				& 1
	\end{bmatrix},
	$$
	then
	$$ Q^*C_1Q= \begin{bmatrix}
	-225 & 180  			& 0\\
	180	& -144 		& 0\\
	0	& 0				& 0
	\end{bmatrix}, A_1=\begin{bmatrix}
	-225 & 180  		\\
	180	& -144 	
	\end{bmatrix}$$
	$$ Q^*C_2Q=  \begin{bmatrix}
	-64 & 40  			& 0\\
	40	& -25 		& 0\\
	0	& 0				& 0
	\end{bmatrix}, A_2= \begin{bmatrix}
	-64 & 40  		\\
	40	& -25 		
	\end{bmatrix}$$
	$$ Q^*C_3Q=  \begin{bmatrix}
	-49 & 49  			& 0\\
	49	& -49 		& 0\\
	0	& 0				& 0
	\end{bmatrix}, 
	A_3=
	\begin{bmatrix}
	-49 & 49  		\\
	49	& -49 \end{bmatrix}.		
	$$
	We have 
	$$
	A:=A(\lambda)=-A_2+A_3
	=
	\begin{bmatrix}
	15 & 9  		\\
	9	& -24 	
	\end{bmatrix}
	$$
	and $det A(\lambda)=-441 \neq 0.$
	
	On the other hand,  
	$$ 
	A^{-1}A_2
	=
	\left[
	\begin{array}{lll}
	- \dfrac{8}{3} & -\dfrac{5}{3}  		\\
	\ - \dfrac{8}{3}	&\ \ \  \dfrac{5}{3} 	
	\end{array}
	\right];  
	A^{-1}A_1=
	\begin{bmatrix}
	\dfrac{60}{7} & \dfrac{48}{7}  		\\
	\dfrac{75}{7}	& \dfrac{60}{7} 	
	\end{bmatrix};$$
	$$ A^{-1}A_1.A^{-1}A_2=\begin{bmatrix}
	\dfrac{-288}{7} & \dfrac{-20}{3}  		\\
	\dfrac{-360}{7}	& \dfrac{-25}{7} 	
	\end{bmatrix};  A^{-1}A_2.A^{-1}A_1=\begin{bmatrix}
	\dfrac{-285}{7} & \dfrac{-228}{7}  		\\
	\dfrac{-35}{7}	& 4 	
	\end{bmatrix};$$
	
	By Theorem \ref{theo:maxrk}, $A_1, A_2, A_3$ are not SDC. 
	We conclude, therefore, that $C_1,C_2,C_3$ are not SDC.
		\hfill$\diamond$
\end{exam}

\section{Completely solving the Hermitian SDC problem} \label{sec:sdp}

As a consequence of Theorem \ref{theo:H1},
every commuting collection of Hermitian matrices can be SDC.
However,
this is just a sufficient but not necessary condition.
For example, it is shown in \cite{Ngan2020} that the matrices
$$ 
C_1 =
\begin{bmatrix}
-1 & -2 &  0\\ -2  & -28  & 0\\ 0  & 0  & 5
\end{bmatrix}, \enskip
C_2 = 
\begin{bmatrix}
1 &  2 & 0\\ 2 & 20 & 0\\ 0 &  0 & -3
\end{bmatrix}, \enskip
C_3 = 
\begin{bmatrix}
2 &4  &0\\ 4 &1 &0\\ 0 &0 &7
\end{bmatrix}
$$
are SDC by 
$$
P = 
\begin{bmatrix}
1 & 0 & -2\\
0 & 0 &1\\
0 & 1 &0
\end{bmatrix}
$$
but $C_1C_2\neq C_2C_1.$
The following provides some equivalent SDC conditions for Hemitian matrices.
It turns out that 
the SDC property of a collection of such matrices is equivalent to the feasibility of a positive semidefinite program (SDP).
This also allows us to use SDP solvers, for example, ``\texttt{CVX}'' \cite{b470},  \ldots 
to check the SDC property of Hermitian matrices.

We first provide some equivalent conditions one of which, see the condition (iv),  leads to our main Algorithm \ref{alg:sdc} below.

\begin{theo}\label{theo:H2}
 The following conditions are equivalent:
	\begin{enumerate} [\rm (i)]
		\item 
		Matrices $C_1, \ldots, C_m \in \H^n$ are  SDC.
		\item
		There exists a nonsingular matrix $P\in \C^{n\times n} $ such that 
		$P^* C_1 P,  \ldots, P^* C_m P$
		are commuting.
		
		\item
		There exists a positive definite matrix $Q=Q^*\in \H^n $ such that 
		$Q C_1 Q,  \ldots, Q  C_m Q$
		are commuting.

		\item
		There exists a positive definite $X=X^*\in \H^n $ solves the following system of $\frac{m(m+1)}{2}$ linear equations
			\begin{equation} \label{eq:sdp}
			\begin{array}{llll}
												& C_i X C_j = C_j X C_i, \enskip \forall 1\leq i< j \leq n.
			\end{array}
			\end{equation}	
	\end{enumerate}  
If $C_1, \ldots, C_m$ are real then so are all other matrices in the theorem.
\end{theo}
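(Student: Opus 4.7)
The plan is to establish the four equivalences via the cycle (i)$\Rightarrow$(ii)$\Rightarrow$(iv)$\Rightarrow$(iii)$\Rightarrow$(ii)$\Rightarrow$(i), using polar decomposition as the main algebraic device and Theorem \ref{theo:H1} to close the loop at the SDC end.

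The implication (i)$\Rightarrow$(ii) is immediate, since any $P$ realising the SDC produces diagonal, hence commuting, matrices $P^{*}C_iP$. Conversely, given (ii), the matrices $P^{*}C_1P,\ldots,P^{*}C_mP$ are pairwise commuting Hermitian matrices, so Theorem \ref{theo:H1} yields a unitary $V$ simultaneously diagonalising them; then $PV$ realises (i). Thus (i)$\Leftrightarrow$(ii), and by taking $P=Q$ it is clear that (iii)$\Rightarrow$(ii) as well.

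The heart of the argument is (ii)$\Rightarrow$(iv), where I would invoke the polar decomposition $P=HU$ with $H=(PP^{*})^{1/2}$ positive definite Hermitian and $U$ unitary. Then $P^{*}C_iP=U^{*}HC_iHU$, and using $UU^{*}=I$ a direct calculation gives
\[
(P^{*}C_iP)(P^{*}C_jP)=U^{*}HC_iH^{2}C_jHU,\qquad (P^{*}C_jP)(P^{*}C_iP)=U^{*}HC_jH^{2}C_iHU.
\]
The commutativity assumption, combined with the invertibility of $U$ and $H$, collapses to $C_iH^{2}C_j=C_jH^{2}C_i$, so $X:=H^{2}$ is a positive definite solution of the linear system in (iv). This is the only non-routine step of the whole proof.

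The link (iii)$\Leftrightarrow$(iv) is a direct calculation: setting $X=Q^{2}$, one finds $(QC_iQ)(QC_jQ)=QC_iXC_jQ$ and $(QC_jQ)(QC_iQ)=QC_jXC_iQ$, so commutativity of the $QC_iQ$'s is equivalent, after cancelling the invertible factors $Q$ on the outside, to $C_iXC_j=C_jXC_i$; in the other direction, take $Q:=X^{1/2}$, which is positive definite and Hermitian. For the final real-matrix addendum, I would appeal to Theorem \ref{theo:maxrk}: when the $C_i$'s are real symmetric and SDC, the diagonalising $P$ in (i) can already be chosen real, whence its polar factors $H,U$ are real, and so are $Q=H$ and $X=H^{2}$; the reverse passages via $P=Q$ and $Q=X^{1/2}$ likewise preserve realness.
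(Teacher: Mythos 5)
Your proof is correct and follows essentially the same route as the paper: the trivial implication from SDC to commutativity, the polar decomposition $P=HU$ to pass from a general congruence to a positive definite one, Theorem \ref{theo:H1} to recover the SDC property from commuting Hermitian matrices, and the substitution $X=Q^{2}$ to link the positive definite congruence with the linear system \eqref{eq:sdp}. The only difference is cosmetic (the order in which the cycle of implications is traversed), and your treatment of the real-matrix addendum is in fact more explicit than the paper's, which leaves that claim unproved.
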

\begin{proof}
	(i$\Rightarrow$ii).
	If $C_1, \ldots, C_m$ are  SDC then 
	$P^* C_i P$ is diagonal for every $i=1, \ldots, m,$
	for some nonsingular matrix $P.$
	This yields the later matrices are commuting..

	\noindent
	 (ii$\Rightarrow$iii).
	Applying polar decomposition to $P,$
	$P = QU $ ($Q=Q^*$ positive definite, $U$ unitary),
	we have
	\begin{align*}
	( U^*Q^* C_i Q U) ( U^*Q^* C_j Q U) & = (P^*C_iP)(P^*C_jP)
	= (P^*C_jP)(P^*C_iP) \\
	&= ( U^*Q^* C_j Q U) ( U^*Q^* C_i Q U),
	\end{align*}
	and hence $(QC_iQ) $ and $QC_jQ$ commute.

	\noindent
	 (iii$\Rightarrow$i). 	
	This implication is clear by Theorem \ref{theo:H1}.	
	
	\noindent
	 (iii$\Leftrightarrow$iv).	
The existence of a positive definite matrix $P$  such that
$$
(PC_iP) (PC_jP) = (PC_jP)(PC_iP), \enskip \forall i\neq j,
$$
in other words
$C_iP^2C_j = C_jP^2 C_i,$ 
is equivalent to which of  the positive definite matrix $X$ satisfying
$C_iXC_j = C_jX C_i,$ for all $i\neq j.$
It is clear that the last equations are just linear in $X.$
Conversely, if $X$ is positive definite which satisfies 
$C_iXC_j = C_jX C_i,$ for all $i\neq j,$
then $P$ is just picked as the square root of $X.$
\end{proof}

%

\subsection{The algorithm}

Based on Theorems \ref{theo:H1} and \ref{theo:H2},
our algorithm 
consists of two stages:

(1) detecting whether the given hermtian matrices are SDC 
 by solving the linear system \eqref{eq:sdp},
and obtaining a commuting  Hermitian matrices;

(2) simultaneously diagonalizing via congruence the resulting matrices.

It may apply Algorithm \ref{alg:withI} to perform the second stage.
However,
to proceed Step 1 of 
Algorithm \ref{alg:withI}, 
one needs to compute eigenvalue decomposition of all matrices $C_1, \ldots, C_m.$
This may present a high complexity. 
Our main algorithm (Algorithm \ref{alg:sdc}) 
then prefer Algorithm \ref{alg:dodo} below to Algorithm \ref{alg:withI} for the second stage.
It
will exploit the works in \cite{Bustamante2020, Chris2020},
where
the work \cite{BuByMe93}
deals with a simultaneous  diagonalization of two commuting normal matrices 
and
the very recent
work \cite{Chris2020} extends  to more (commuting normal) matrices
with a performance in \textsc{Matlab}.
This extension can be summarized as follows.
Suppose 
$C_i =[c_{u v}^{(i)}] \in \H^n $ 
and let
\begin{subequations}
\begin{align}
\mathrm{off}_2
&  = 
\mathrm{off}_2(C_1, \ldots, C_m) = 
\sum_{i=1}^{m} \sum_{u \neq v} |c_{u v}^{(i)}|^2, \label{eq:off1}\\
R(u,v,c,s)
& =  
I_n + (c-1)\mathbf{ e}_u \mathbf{ e}_u^T - \bar{s}\mathbf{ e}_u \mathbf{ e}_v^T + s \mathbf{ e}_v \mathbf{ e}_u^T + (\bar{c}-1) \mathbf{e}_v \mathbf{ e}_v^T, \label{eq:off2}
\end{align}
\end{subequations}
where 
$u,v =1, \ldots,n$ and
$c,s \in \C$ with 
$|c|^2+|s|^2 =1.$ 
It can be verified that for 
a given pair $(c,s)$ and every
pair $(u,v) \in \{1, \ldots, n\}^2,$ 
the following holds true:
\begin{align}	\label{eq:offR}
\mathrm{off}_2(RC_1R^*, \ldots, RC_mR^*)
& = \mathrm{off}_2(C_1, \ldots, C_m) - 
\sum_{i=1}^{m} \left( |c_{u v}^{(i)}|^2 + |c_{vu}^{(i)}|^2\right) \notag \\
& \enskip 
    + \sum_{i=1}^{m} 
    	\left| c^2 \bar{c}_{uv}^{(i)} + cs (\bar{c}_{uu}^{(i)} - \bar{c}_{vv}^{(i)}) - s^2 \bar{c}_{vu}^{(i)} \right|^2  \notag \\
& \enskip 
+ \sum_{i=1}^{m} 
	\left |c^2  {c}_{vu}^{(i)} + cs ( {c}_{uu}^{(i)} -  {c}_{vv}^{(i)}) - s^2  {c}_{uv}^{(i)} \right|^2.
\end{align}
In their method \cite{BuByMe93, Chris2020},
at the loop w.r.t each $(u,v),$
it tries to find $c,s$ that makes 
$
\mathrm{off}_2(RC_1R^*, \ldots, RC_mR^*) < \mathrm{off}_2( C_1 , \ldots,  C_m ).
$
It is shown in, eg., \cite{GoHo59}, that $c,s$ can be looked for that minimize 
the last sum on the right hand-side of \eqref{eq:offR}. 
This is equivalent to that solve that minimize the amount
$
\| M_{uv} z \| _2
$
with
$$
M_{uv} =
\begin{bmatrix}
\bar{c}_{uv}^{(1)} &
( \bar{c}_{uu}^{(1)} - \bar{c}_{vv}^{(1)}) & - \bar{c}_{vu}^{(1)}\\
 {c}_{vu}^{(1)} &
(  {c}_{uu}^{(1)} -  {c}_{vv}^{(1)}) & - {c}_{vu}^{(1)}\\
\vdots & \vdots & \vdots \\
\bar{c}_{uv}^{(m)} &
( \bar{c}_{uu}^{(m)} - \bar{c}_{vv}^{(m)}) & - \bar{c}_{vu}^{(m)}\\
{c}_{vu}^{(m)} &
(  {c}_{uu}^{(m)} -  {c}_{vv}^{(m)}) & - {c}_{vu}^{(m)}
\end{bmatrix}, \enskip
z = 
\begin{bmatrix}
c^2\\ sc \\ s^2
\end{bmatrix}.
$$
Note that $\|z\|_2=1$ and $c,s$ can be parameterized as 
$c =\cos(\theta_{uv}),$
$s = e^{\mathbf{ i} \phi_{uv} }\sin(\theta_{uv}),$
	$(\theta_{uv}, \phi_{uv}) \in [-\frac{\pi}{4}, \frac{\pi}{4}] \times [-\pi, \pi].$ 
 
\noindent
\begin{algo}   \label{alg:dodo}  
	SDC commuting Hermitian matrices.\\ 
\begin{tabular}{|p{\textwidth} | }
	\hline 
	\textsc{Input}: \hskip3mm
	Commuting Hermitian matrices 
	$C_1, \ldots, C_m \in \C^{n\times n},$
 a tolerance $\epsilon >0.$

	\noindent 
	\textsc{Output}:
	A unitary matrix $U$ such that 
	$\mathrm{off}_2 \leq \epsilon \sum_{i=1}^m \| C_i \|_F:= \nu(\epsilon).$  
	\\ \hline 
	\vspace{.1mm}
	\textit{Step 1}.
		Accumulate  $Q = I_n.$
		
	\textit{Step 2}. 
		\textsc{While} $\mathrm{off}_2 > \nu(\epsilon)$ 
		\begin{enumerate}[(i)]
			\item 
			For every pair $(u,v),$ $1\leq u < v\leq n,$ 
			determine the rotation $R(u,v,c,s)$ with $(c,s) = (\cos \theta_{uv} ,  e^{\mathbf{i} \phi} \sin  \theta_{uv} )$ being the solution to the problem
			$\min\{\| M_{uv} z \|_2:\  z =[c^2\enskip sc \enskip s^2]^T\} $ above.
			\item			
				Accumulate $Q = QR(u,v,c,s),$ $C_i = R(u,v,c,s)^* C_i R(u,v,c,s),$ $i=1, \ldots, m.$
		\end{enumerate}
	\vspace{-7mm}
\\ \hline
\end{tabular}	
\end{algo}

What we have discussed leads to the main algorithm as follows.
\begin{algo}  \label{alg:sdc}  
 Solving the Hermitian SDC problem.\\ 
\begin{tabular}{|p{\textwidth}|} 
	\hline
\textsc{Input}: \hskip3mm
Hermitian matrices 
$C_1, \ldots, C_m \in \C^{n\times n}$
(not necessary commuting).

\noindent 
\textsc{Output}:
A nonsingular matrix $U$ such that $U^* C_i UP$'s are diagonal (if exists).
\\ \hline
\vspace{-4mm}
\begin{enumerate}[\it Step 1:]
\item 
	If the system \eqref{eq:sdp} has no a positive definite solution $P,$
	conclude the initial matrices are not  SDC.
	
	Otherwise, compute the square root $Q$ of $P,$ $Q^2 = P$ and
	go to Step 2.
\item 
		
	Apply Algorithm \ref{alg:dodo} to find a unitary matrix $U$ that 
	 simultaneously diagonalizes the matrices $I,$ $ Q^*C_1Q, \ldots, Q^*C_mQ.$ 
	Then $U=QV.$
\end{enumerate}
\vspace{-7mm}
\\ \hline
\end{tabular}
\end{algo}

\begin{rema} 
	In Algorithm \ref{alg:sdc} ,
		the output data will be automatically real if the input is real.
In this situation, the input matrices are all real symmetric. 
	Let $c_{uv}^i$ be the $(u,v)$th entry of $C_i \in \Sy^n.$ 
	A positive definite matrix $X=[x_{uv}]$ that solves \eqref{eq:sdp} will equivalently satisfy
	\begin{equation}\label{eq:sdp-mod}
	X\succ 0, \enskip
	\tr(A_{ij} X) = 0, \enskip 1\leq i<j\leq m,
	\end{equation}
	where $A_{ij} =[a_{uv}^{ij}] \in \Sy^n$ with
	$$
	a_{uv}^{ij} = 
	\left\{
	\begin{array}{lllll}
	\sum_{p,q=1}^n (c_{pu}^i c_{qu}^j -c_{pu}^j c_{qu}^i)	 &  \mbox{if } u=v,\\
	2\sum_{p,q=1}^n (c_{pu}^i c_{qv}^j - c_{pu}^j c_{qv}^i)	 &  \mbox{if } u\neq v.
	\end{array}
	\right.
	$$
	It is well known, see eg., in \cite{KlepSch13, r697}, that 
	$$
	\{X\in \Sy^n_+ :\ \tr(A_{ij} X) =0,\enskip 1\leq i<j\leq m\} \neq \{0\}
	\Leftrightarrow
	\mathtt{span}\{A_{ij}\}_{1\leq i<j\leq m} \cap\ \Sy^n_{++} = \emptyset.
	$$
	We then have the following.
	\hfill $\diamond$
\end{rema}

\begin{coro}
	If $\mathtt{span}\{A_{ij}\}_{1\leq i<j\leq m} \cap\ \Sy^n_{++} \neq  \emptyset$
		then $C_1, \ldots, C_m$ are not SDC. 
\end{coro}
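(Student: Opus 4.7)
The plan is to argue by contraposition, building directly on Theorem \ref{theo:H2}(iv) and the reformulation \eqref{eq:sdp-mod} given in the preceding remark. The whole argument rests on one elementary observation from semidefinite duality: if $Y\succ 0$ and $X\succ 0$, then $\tr(YX)>0$, because $\tr(YX)=\tr(Y^{1/2}XY^{1/2})$ and $Y^{1/2}XY^{1/2}$ is positive definite (hence has strictly positive trace).

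First, I would invoke Theorem \ref{theo:H2}(iv) (restricted to the real setting, which is the one where the symmetric matrices $A_{ij}$ are defined) to rephrase SDC of $C_1,\ldots, C_m$ as the existence of a positive definite $X\in \Sy^n$ with $C_i X C_j = C_j X C_i$ for every $1\leq i<j\leq m$. Next, I would use the reformulation \eqref{eq:sdp-mod} from the remark: such an $X$ satisfies $\tr(A_{ij}X)=0$ for every $1\leq i<j\leq m$.

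Now suppose, toward a contradiction, that the hypothesis $\mathtt{span}\{A_{ij}\}_{1\leq i<j\leq m}\cap \Sy^n_{++}\neq\emptyset$ holds and yet $C_1,\ldots,C_m$ are SDC. Pick scalars $\mu_{ij}\in \R$ such that
\[
Y := \sum_{1\leq i<j\leq m} \mu_{ij} A_{ij} \in \Sy^n_{++},
\]
and let $X\succ 0$ be a solution of \eqref{eq:sdp-mod} coming from the assumed SDC property. Then by linearity of the trace
\[
\tr(YX) \;=\; \sum_{1\leq i<j\leq m} \mu_{ij}\,\tr(A_{ij}X) \;=\; 0,
\]
contradicting the positivity observation $\tr(YX)>0$. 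Hence no such $X$ exists, and by the chain of equivalences above, $C_1,\ldots,C_m$ cannot be SDC.

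There is essentially no obstacle here: the corollary is a direct instance of the weak duality between the primal feasibility problem \eqref{eq:sdp-mod} and the dual ``span vs.\ positive definite cone'' separation already quoted in the remark. The only thing to be careful about is that the remark quotes the equivalence with the feasible set being nontrivial in $\Sy^n_+$ (i.e.\ containing a nonzero positive semidefinite solution), whereas Theorem \ref{theo:H2}(iv) demands a strictly positive definite $X$; but for the contrapositive direction used here that distinction is harmless, since a positive definite $X$ is in particular a nonzero element of $\Sy^n_+$, and the trace identity $\tr(YX)=0$ is already incompatible with $Y\succ 0$ and $X\succ 0$.
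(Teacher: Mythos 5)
Your proof is correct and follows essentially the same route as the paper, which obtains the corollary by combining Theorem \ref{theo:H2}(iv) with the quoted equivalence between nontrivial feasibility of \eqref{eq:sdp-mod} and the span of the $A_{ij}$ missing $\Sy^n_{++}$. The only difference is cosmetic: you prove the (easy) weak-duality direction $\tr(YX)>0$ for $Y,X\succ 0$ explicitly instead of citing it, and you correctly note that the gap between ``nonzero PSD solution'' and ``positive definite solution'' is harmless in this direction.
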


\subsection{Examples} \label{sec:exam2}
 
\begin{exam} \label{ex:1}
The matrices
$$
C_1 = \begin{bmatrix} 0 & 1\\ 1 & 1\end{bmatrix}, \enskip
C_2 = \begin{bmatrix} 1 & 1\\ 1 & 0\end{bmatrix}
$$
are $\C$-SDC as shown in \cite{Bustamante2020}.
However, they are
 not SDC by Theorem \ref{theo:maxrk} 
 since $C_1$ is invertible and
$$
C_1^{-1} C_2 = 
\begin{bmatrix}
 0 & -1\\ 1 & 1
\end{bmatrix}
$$
has only complex eigenvalues $\frac{1\pm i\sqrt{3} }{2}.$

We can check this by applying Theorem \ref{theo:H2} as follows.
The matrices
are SDC if and only if there is a positive semidefinite matrix 
$
X = \begin{bmatrix} x & y\\ y & z\end{bmatrix} \succ 0,
$
which is equivalent to
$x>0$ and $xz > y^2,$
such that
$$
C_1 X C_2 = C_2XC_1  \left( = (C_1 X C_2)^* \right).
$$
This is equivalent to
$$
\left\{
\begin{array}{llll}
x > 0, & xz> y^2 \\
x + y + z & =0.
\end{array}
\right.
$$
But the last condition is impossible since there do not exist $x,z >0$ such that
$
xz > y^2 = (x+z)^2.
$
Thus $C_1$ and $C_2$ are not SDC on $\R.$
%
\hfill$\diamond$
\end{exam}

\begin{exam} 
	Reconsider the matrices in Example \ref{ex:2}.
	To apply Theorem \ref{theo:H2} or Algorithm \ref{alg:sdc} ,
	we want to find  
		\begin{equation} \label{eq:X}
		X = \begin{bmatrix} x & y & z\\ y & t & u\\ z & u & v		\end{bmatrix} \succ 0 \enskip
		\left(\Leftrightarrow x>0, xt >y^2, \det(X) >0 \right)		
		\end{equation}
		such that 
		$$
		C_1X C_2 = (C_1X C_2)^*, \enskip
		C_1X C_3 = (C_1 X C_3)^*		, \enskip
		C_2 X C_3= (C_2 X C_3)^*.		
		$$
By directly computing,
%
$$
C_1 X C_2 = (C_1 X C_2)^* \Leftrightarrow
\left\{
\begin{array}{lllllll}
12 u  &- 9 t		& - 4 v		& - 3 y 		&+ 2 z	 		& =0\\
- 7 u  & +6 t  &+ 2 v 		&+ 2 y 		&- z				& = 0\\
2 u 		&+2 t 	& -2 v 		& 				&+  z				& =0,
\end{array}
\right. 
$$
$$
C_1 X C_3 = (C_1 X C_3)^* \Leftrightarrow
\left\{
\begin{array}{lllllll}
40u 	&- 33t 		&- 12v 		&- 11 y			& + 6 z	 		& =0\\
7 u 		&- 6 t 		&- 2 v 		&- 2 y 			&+  z			& = 0\\
18  u 	&- 14 t 		&- 6 v 		&- 4 y 			&+ 3 z			& =0,
\end{array}
\right. 
$$
$$
C_2 X C_3 = (C_2  X C_3)^* \Leftrightarrow
\left\{
\begin{array}{llllllll}
  - 12 u  &+ 9 t  	&+ 4 v  & + 3 y  & - 2 z						 & =0		\\
  4 u 		&- 2 t		& - 2 v		&			& +  z	  					&=0\\
  7 u		& - 6 t 		&- 2 v 		&- 2 y 	&+ z							& 					=0.
\end{array}
\right.
$$
Combining the linear equations above we obtain
$$
u=2y, \enskip
t = y, \enskip
v = 3y+  \frac{z}{2}.
$$
We then pick
$y=1, z=4,x=6$ 
and
$$
X = 
\begin{bmatrix}
6 & 1 & 4\\
1 & 1 & 2\\
4 & 2 & 5
\end{bmatrix} \succ 0
$$
makes 
$
\sqrt{X} C_1 \sqrt{X},  \sqrt{X} C_2 \sqrt{X}, \sqrt{X} C_3 \sqrt{X}
$
to be commuting
by Theorem \ref{theo:H2}.
Thus three initial matrices are SDC on $\R,$ and so are they on $\C.$ 
\hfill$\diamond$
\end{exam}

We now consider other examples of which all given matrices are singular.

 \begin{exam} 
 	For the matrices in Example \ref{ex:3},
we will check if there exists $X$ as in \eqref{eq:X} satisfying the following:
$$
C_1 X C_2 = (C_1 X C_2)^* \Leftrightarrow
\left\{
\begin{array}{lllllll}
9 u  &- 9 t		& - 2 v		& - 3 y 		&+ 2 z	 		& =0\\
- 5 u  & +6 t  &+   v 		&+ 2 y 		&- z				& = 0\\
-12 u 		&+12 t 	& +4 v 		& +3y				&-  z				& =0,
\end{array}
\right. 
$$
$$
C_1 X C_3 = (C_1 X C_3)^* \Leftrightarrow
\left\{
\begin{array}{lllllll}
-5u 	&- 3t 		& +4v 	 &		&- y			& - z	 		& =0\\
19 u 	&- 12 t 	&- 7 v 	 & -x	&- 7 y 			&+  5z			& = 0\\
28  u 	&- 24 t 	&- 8 v 	 &-3x	&- 19 y 		&+ 11 z			& =0,
\end{array}
\right. 
$$
$$
C_2 X C_3 = (C_2  X C_3)^* \Leftrightarrow
\left\{
\begin{array}{llllllll}
- 12 u  &+ 9 t  	&+ 4 v  & + 3 y  & - 2 z		 & =0		\\
4 u 		&- 2 t		& - 2 v		&			& +  z	  					&=0\\
7 u		& - 6 t 		&- 2 v 		&- 2 y 	&+ z							& 					=0.
\end{array}
\right.
$$
The system of linear equations combined by three conditions above has only trivial solution (since its argument matrix is full rank). 
This means only zero matrix  satisfies \eqref{eq:sdp}.
The given matrices are hence not SDC on $\R.$
\hfill$\diamond$
\end{exam}

\begin{exam} 
Let us turn to the matrices in Example \ref{ex:4}.
We will check if there exists $X$ as in \eqref{eq:X} satisfying the following:
$$
C_1 X C_2 = (C_1 X C_2)^* \Leftrightarrow
\left\{
\begin{array}{lllllll}
y  &- 2 z		& + 4 t		& - 12 u 		&+ 8 v	 		& =0\\
- 2 y  & +4 z  &-   8 t 		&+ 24 u 		&- 16 v				& = 0\\
4 y 		&-8 z 	& +16 t 		& -48 u				&+ 32 v				& =0,
\end{array}
\right. 
$$

$$
C_1 X C_3 = (C_1 X C_3)^* \Leftrightarrow
\left\{
\begin{array}{lllllll}
x 	&+ 7y 		& -6 z 	 &+ 12 t		&- 20 u			& + 8 v	 		& =0\\
2 x 	&+ 14 y 	&- 12 z 	 & + 24 t	&- 40 u 			&+  16v			& = 0\\
4  x 	&+ 28 y 	&- 24 z	 &+ 48 t	&- 80 u 		&+ 32 v			& =0,
\end{array}
\right. 
$$

$$
C_2 X C_3 = (C_2  X C_3)^* \Leftrightarrow
\left\{
\begin{array}{llllllll}
y  &- 2 z  	&+ 3 t  & - 8 u  & + 4 v		 & =0		\\
- 2 y 		&+ 4 z		& - 6 t		&	+ 16 u		& -  8 v	  					&=0\\
4 y		& - 8 z 		&+12 t 		&- 32 u 	&+ 16 v							& 					=0.
\end{array}
\right.
$$
The general  solutions of these linear equations are of the form
$$
\left\{
\begin{array}{llllllll}
x &=  	&- 4 y       	& - 16 u  	& 	+16 v	\\
t &= 	& 	    		& \quad 4 u		&	-4 v	\\
z & = 	&\dfrac{y}{2} 	&+  2 u 	&- 4 v. 			
\end{array}
\right.
$$
We have $$ xt = -16y(u-v)-64(u-v)^2>y^2  \Leftrightarrow (y+8u-8v)^2<0$$
There do not exist $x,t$ such that $xt>y^2.$ This means there is no positive definite matrix  satisfying \eqref{eq:sdp}.We  have $C_1,C_2$ and $C_3$ are not $\R$-SDC.
\hfill$\diamond$
\end{exam}	

\subsection{Numerical tests}
In this section we give numerical tests illustrating Algorithm \ref{alg:sdc} .

There are several methods for computing a square root of a positive definite matrix. 
In our \textsc{Matlab} experiments, we exploit the \textsc{Matlab} function
``\texttt{sqrtm.m}'' to find a square root of matrix $Q$  in Step 1.
This function executes the algorithm provided in \cite{Deadman2013} for computing a square root of an arbitrary square matrix.
Table \ref{table1} shows some numerical tests with respect to several values of $m$ and $n.$ 
Each result is the average of three ones.
The Hermitian matrices $C_1, \ldots, C_m,$
of which the SDC property will afterwards be estimated, 
are constructed as 
$C_i = P^* D_i P,$
where
  $D_i$ is diagonal
  and $P$ is invertible 
  that are
  randomly taken from a uniform distribution on the interval $[0, 1).$
The backward errors are estimated as 
$$
\mathrm{Err} = \max \left\{ \dfrac{\| U^* C_i  U - \diag ( \diag(U^* C_i  U) )  \|_2}{\| U^* C_i  U \|_2}  \bigg|  i=1, \ldots, m \right\},
$$
where $\diag ( \diag(X) )$ denote the diagonal matrix whose diagonal 
is of $X.$
For the Stage 2 w.r.t. Algorithm \ref{alg:dodo}, we fix a tolerance to be the floating-point relative accuracy ``\texttt{eps}'' of \textsc{Matlab}, to the power of $\frac{3}{2}.$

\begin{table}[h]
\centering{
	\begin{tabular}{| l | l | c | c |}
		\hline
		$m$       & $ n$ & Err 				& CPU time (s) \\ \hline
		3        & 3  &     3.33e-12 & 6.59 \\ \hline
		10 		& 20 & 8.64e-13		& 922.81 \\ 
		50 		& 100 &  				&   \\ 
		50 		& 200 &  				&   \\ 
		100 		& 100 &  				&   \\ 				
		\hline
	\end{tabular}
}
	\caption{Numerical tests the SDC property of collections of Hermitian matrices.} \label{table1}
\end{table}

\section{Conclusion and discussion}
We have provided some equivalent conditions detecting whether  a collection of Hermitian matrices can be simultaneously diagonalized via $*$-congruence.
One of these conditions 
leads to solve a positive semidefinite program with a positive definite solution.
Combining this with an existing Jacobi-like method for the simultaneous diagonalization of commuting normal matrices, we propose an algorithm for 
detecting/computing a simultaneous diagonalization of a collection of non-commuting Hermitian matrices.
We have also present some numerical tests for this main algorithm.

However, not every collection of normal matrices can be SDC.
In some applications, for example the quadratically constrained quadratic programming,
one may seek to approximate given matrices by simultaneously diagonalizable (via congruence) ones.
One should addressed two possibilities for approximation in the future work:
the first one make the initial matrices become pairwise commute, and other is that of immediately finding SDC matrices that ``colse to'' the initial ones.

\subsubsection*{Acknowledgement}  
%
%

\appendix

\section{}
 \label{app:1}

\begin{lemm}\label{lem:appdx1} 
	The matrices
$
C_1= 
	\begin{bmatrix}
	0_k & 0 \\ 0 & \hat{C}_1
	\end{bmatrix},
\ldots,
C_m=
	\begin{bmatrix}
0_k & 0 \\ 0 & \hat{C}_m
\end{bmatrix}
$
are SDC if and only if 
so are 	
$ \hat{C}_1, \ldots, \hat{C}_m.$ 
\end{lemm}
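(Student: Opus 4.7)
The plan is to prove the two directions separately, with the easy one essentially by padding and the harder one by extracting a suitable submatrix from the column data of the congruence.

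For the ``if'' direction, suppose $\hat{C}_1, \ldots, \hat{C}_m$ are SDC by a nonsingular $\hat P\in\F^{(n-k)\times(n-k)}$, so that each $\hat P^*\hat C_i \hat P$ is diagonal. I would then set
\[
P = \begin{bmatrix} I_k & 0 \\ 0 & \hat P \end{bmatrix},
\]
which is nonsingular, and simply compute that
\[
P^* C_i P = \begin{bmatrix} 0_k & 0 \\ 0 & \hat P^* \hat C_i \hat P \end{bmatrix}
\]
is diagonal for every $i$.

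For the ``only if'' direction, suppose $C_1, \ldots, C_m$ are SDC by some nonsingular $P\in\F^{n\times n}$. Write the columns of $P$ as $p_1,\ldots,p_n$ and split each one as $p_j = \begin{bmatrix} a_j \\ b_j \end{bmatrix}$ with $a_j\in\F^k$ and $b_j\in\F^{n-k}$. Because $C_i$ kills the top $k$ coordinates and acts as $\hat C_i$ on the remaining ones, one immediately gets $p_j^* C_i p_l = b_j^* \hat C_i b_l$ for all $i,j,l$. The hypothesis that $P^* C_i P$ is diagonal then translates into
\[
b_j^* \hat C_i b_l = 0 \quad \text{for all } i=1,\ldots,m \text{ and all } j\neq l.
\]

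The crux is now to extract a nonsingular $(n-k)\times(n-k)$ matrix from the vectors $b_1,\ldots,b_n$. Let $B=[b_1\ \cdots\ b_n]\in\F^{(n-k)\times n}$; since $B$ consists of the bottom $n-k$ rows of the nonsingular matrix $P$, it has full row rank $n-k$. Hence one can choose indices $j_1<\cdots<j_{n-k}$ such that $b_{j_1},\ldots,b_{j_{n-k}}$ are linearly independent, and define $\hat P := [b_{j_1}\ \cdots\ b_{j_{n-k}}] \in \F^{(n-k)\times(n-k)}$, which is nonsingular. Then for $s\neq t$ the off-diagonal entry $(\hat P^*\hat C_i\hat P)_{s,t} = b_{j_s}^*\hat C_i b_{j_t}$ vanishes by the orthogonality relations above (since $j_s\neq j_t$), so $\hat P^*\hat C_i\hat P$ is diagonal for each $i$, i.e., $\hat C_1,\ldots,\hat C_m$ are SDC.

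The only non-routine step is the extraction of the nonsingular $\hat P$ from $P$; once the rank argument on $B$ is in place, the off-diagonal vanishing is inherited automatically from $P^*C_iP$ being diagonal. I do not expect any real obstacle beyond being careful with the indexing, and the argument works uniformly over $\F=\R$ or $\C$.
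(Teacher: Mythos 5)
Your proof is correct and follows essentially the same route as the paper: the ``if'' direction by padding with $I_k$, and the ``only if'' direction by observing that the bottom $n-k$ rows of $P$ have full row rank, selecting $n-k$ linearly independent columns to form a nonsingular $\hat P$, and inheriting the vanishing of off-diagonal entries from $P^*C_iP$. The paper phrases the column selection as right-multiplication by a permutation matrix, but the argument is the same.
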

\begin{proof}
If $\hat{C}_1, \ldots, \hat{C}_m$
are SDC by a nonsingular matrix $\hat{P}$
then 
${C}_1, \ldots, {C}_m$
are certainly SDC by the nonsingular matrix
$
P = 
\begin{bmatrix}
I_k & 0 \\ 0 &\hat{P}
\end{bmatrix}.
$

Conversely, 
suppose $C_1, \ldots, C_m$ are SDC by a nonsingular matrix $U.$
Partitioning 
$U =  \left[
\begin{array}{cc}
U_1 &U_2 \\
U_3&U_4 \\
\end{array} 
\right],
$
$U_3 \in \F^{k\times k},$
$U_4 \in \F^{(n-k) \times (n-k)}.$
Note that $\rk [U_3 \enskip U_4] = n-k$ due to the nonsingularity of $U.$
For every $i=1, \ldots, m,$ the following matrix 
$$
U^* 
	\begin{bmatrix}
0_k & 0 \\ 0 & \hat{C}_i
\end{bmatrix}
U
= 
	\begin{bmatrix}
U_3^*\hat{C}_i U_3& U_3^*\hat{C}_i U_4 \\ U_4^*\hat{C}_i U_3 & U_4^*\hat{C}_i U_4
\end{bmatrix}
$$
is diagonal.
Since the later block matrix is diagonal,
we can assume $U_4$ is nonsingular
after multiplying on the right of $U$ by an appropriate permutation matrix.
This means $U_4^*\hat{C}_i U_4$ 
is diagonal for every $i=1, \ldots, m.$
\end{proof}

\begin{lemm}\label{lem:rk} 
	Let $C_1, \ldots, C_m \in \H^n$
	and denote 
	$\bC (\lambda) = \lambda_1 C_1 + \ldots + \lambda_m C_m,$ 
	$\lambda=(\lambda_1, \ldots, \lambda_m) \in \R^m.$ 
	Then
	\begin{enumerate}[\rm (i)]
		\item
		$
		\bigcap_{\lambda\in \R^m} \ker \bC(\lambda) = \bigcap_{i=1}^m \ker C_i
		= \ker C,
		$
		where
		$C = \begin{bmatrix}C_1^* & \ldots & C_m^* 	\end{bmatrix}^*.$ 
		\item
		$\max\{\rk_{\R} \bC(\lambda) |\ \lambda \in \R^m\} \leq \rk_\R C.$ 
		
		\item
		Suppose $\dim_{\R} (\bigcap_{i=1}^m \ker C_i ) =k.$
		Then
		$
		\bigcap_{i=1}^m \ker C_i  = \ker \bC(\underline{\lambda})
		$
		for some $\underline{\lambda} \in \R^{m}$
		if and only if 
		$\rk_{\R}\bC(\underline{\lambda}) = \max_{\lambda \in \R^m} \rk_{\R} \bC(\lambda)   = \rk_{\R} C = n-k.
		$		
	\end{enumerate}
\end{lemm}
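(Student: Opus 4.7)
The plan is to prove the three parts in sequence using only elementary linear algebra, each building on the previous one.

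For part (i), the plan is to establish the equalities by double inclusion. First I would show $\bigcap_{i=1}^m \ker C_i = \ker C$ directly from the block structure of $C = [C_1^*\ \cdots\ C_m^*]^*$: the equation $Cx=0$ holds iff each block satisfies $C_i x = 0$. For the pencil side, the inclusion $\bigcap_{i=1}^m \ker C_i \subseteq \bigcap_{\lambda\in \R^m} \ker \bC(\lambda)$ is immediate from linearity, since $\bC(\lambda) x = \sum_{i=1}^m \lambda_i C_i x = 0$ whenever each $C_i x = 0$. The reverse inclusion follows by specializing $\lambda$ to the standard unit vectors $e_i \in \R^m$, which give $\bC(e_i) = C_i$ and thus force $C_i x = 0$ for every $i$.

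Part (ii) will be a direct corollary of (i): for every $\lambda \in \R^m$ we have $\ker C = \bigcap_\mu \ker \bC(\mu) \subseteq \ker \bC(\lambda)$, so the rank--nullity theorem yields $\rk_\R \bC(\lambda) = n - \dim \ker \bC(\lambda) \leq n - \dim \ker C = \rk_\R C$.

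For part (iii), the plan is to reduce the claim to a dimension count. From (i), $\ker C = \bigcap_{i=1}^m \ker C_i$ has dimension $k$, so $\rk_\R C = n-k$. Given any $\underline{\lambda}\in \R^m$, (i) again furnishes the inclusion $\ker \bC(\underline{\lambda}) \supseteq \bigcap_{i=1}^m \ker C_i$; since the right-hand side has dimension $k$, equality is equivalent to $\dim \ker \bC(\underline{\lambda}) = k$, i.e.\ $\rk_\R \bC(\underline{\lambda}) = n-k = \rk_\R C$. Combining this with the bound of (ii) shows that the condition is in turn equivalent to $\rk_\R \bC(\underline{\lambda}) = \max_{\lambda \in \R^m}\rk_\R \bC(\lambda) = \rk_\R C = n-k$, closing the iff.

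The argument is routine bookkeeping and no step presents a real obstacle. The only minor point to be aware of is that the $\max$ in (iii) must actually be attained for the statement to be nonvacuous; this is already guaranteed by the paper's context (Algorithm \ref{alg:Schm} and Theorem \ref{theo:maxrk} produce such a $\underline{\lambda}$), and can also be justified in general by noting that the set of $\lambda$ where $\bC(\lambda)$ fails the generic rank is the zero set of finitely many polynomials in $\lambda_1,\ldots,\lambda_m$, whose complement in $\R^m$ is nonempty as soon as some linear combination attains rank $\rk_\R C$.
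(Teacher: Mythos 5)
Your proposal is correct and follows essentially the same route as the paper: part (i) by direct inspection, part (iii) by combining the kernel inclusion from (i) with a dimension count. The only cosmetic difference is in part (ii), where the paper bounds $\rk \bC(\lambda)$ via the factorization $\bC(\lambda) = \begin{bmatrix}\lambda_1 I & \cdots & \lambda_m I\end{bmatrix} C$ and the rank-of-a-product inequality, while you use the kernel containment $\ker C \subseteq \ker \bC(\lambda)$ together with rank--nullity; both are immediate.
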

\begin{proof}
	The part (i) is easy to check. 
	The part (ii) is followed from the fact that
	$$
	\rk_{\R} \bC(\lambda) = \rk_{\R} \left(
	\begin{bmatrix} \lambda_1 I & \ldots & \lambda_m I\end{bmatrix} 
	\begin{bmatrix} C_1 \\ \vdots \\ C_m\end{bmatrix} 
	\right)
	\leq \rk_{\R} \begin{bmatrix} C_1 \\ \vdots \\ C_m\end{bmatrix}  = \rk_{\R} C, 
	$$ 
	for all
	$ \lambda \in \R^m.$
	
	For the last part, with the help of the part (i), we have
	$\ker C = \bigcap_{i=1}^m \ker C_i  \subseteq \ker \bC(\underline{\lambda}) .$
	Then by the part (ii), 
	\begin{align*}
	\bigcap_{i=1}^m \ker C_i  = \ker \bC(\underline{\lambda}) 
	& \Longleftrightarrow
	\dim_{\F} \left(\ker \bC(\underline{\lambda}) \right) = \dim_{\F} \left( \bigcap_{i=1}^m \ker C_i \right)   = n- \rk_{\F} C \\
	& \Longleftrightarrow
	\rk_{\F} \bC(\underline{\lambda}) = \rk_{\F} C = n-k \geq \rk_{\F} \bC(\lambda), \forall \lambda \in \F^m.	
	\end{align*}
	This is certainly equivalent to
	$
	n-k = \rk_{\F} \bC(\underline{\lambda}) = \max_{\lambda \in \F^m} \rk_{\F} \bC(\lambda).  
	$
\end{proof}



\end{document}